\documentclass{ifacconf}

\usepackage{graphicx}      
\usepackage{subcaption}   
\usepackage{amssymb}
\usepackage{amsmath,amsfonts,mathtools}
\usepackage{color}

\usepackage{tikz}
\usetikzlibrary{positioning,calc}
\definecolor{lightsteelblue2}{rgb}{0.737,0.824,0.933}

\usetikzlibrary{shapes, arrows.meta, positioning}

\newtheorem{assumption}{Assumption}
\newtheorem{objective}{Objective}
\newtheorem{remark}{Remark}
\newenvironment{proof}{\par\noindent Proof:\ }{\hfill$\blacksquare$\par}

\makeatletter
\let\old@ssect\@ssect 
\def\NR@nopatch@sectioning{}
\makeatother

\usepackage{natbib}
\usepackage{hyperref}
\makeatletter
\def\@ssect#1#2#3#4#5#6{%
   \NR@gettitle{#6}
   \old@ssect{#1}{#2}{#3}{#4}{#5}{\Sectionformat{#6}{#1}}%
}
\makeatother

\usepackage{fancyhdr}
\fancypagestyle{firstpage}{
    \fancyhf{}
    
    \fancyfoot[C]{\small © 2026 the authors. This work has been accepted to IFAC for publication under a Creative Commons Licence CC-BY-NC-ND}
}

\begin{document}
\begin{frontmatter}

\title{Towards Optimal Passive Feedback Control of LTI Systems under LQR Performance}

\author[First]{Armin Gießler} 
\author[First]{Pol Jané-Soneira} 
\author[First]{Sören Hohmann}

\address[First]{Institute of Control Systems,  Karlsruhe Institute of Technology, 76131 Karlsruhe, Germany (e-mail: \{armin.giesser,pol.soneira, soeren.hohmann\}@kit.edu).}

\begin{abstract}
   We study state-feedback design for continuous-time LTI systems with a control input and an external input-output pair. Our objective is to determine feedback gains that render the closed-loop system (strictly) passive with respect to the external port while minimizing the standard LQR cost in the disturbance-free case. The resulting constrained optimization problem is intractable due to bilinear matrix inequalities. We analyze the set of passivating gains, showing it is unbounded, possibly nonconvex, path-connected, and contractible. We propose an indirect approach, in which the set of passivating feedback gains is inner-approximated by a compact, convex polytope. A projected gradient flow is employed to compute a gain within this polytope that minimizes the LQR cost. Numerical examples illustrate the effectiveness of the method.
\end{abstract}

\begin{keyword}
Linear quadratic regulator, optimal control, passivity
\end{keyword}

\end{frontmatter}

\thispagestyle{firstpage}

\section{Introduction}

Dissipativity theory offers a powerful way to describe input-output behavior of dynamical systems through energy-based inequalities. Originating from the seminal work of \cite{willems1972dissipativeI,willems1972dissipativeII}, it now serves as a unifying framework for both input-output analysis and feedback design.
Passivity arises as a special case, corresponding to the supply rate that captures the exchange of power between inputs and outputs (\cite{brogliato_dissipative_2020}). 
For controllable and observable LTI systems, (strict) passivity is equivalent  to  (strict)  positive realness of its transfer function, as shown in \cite{Madeira2016}. 
While passive systems inherit strong stability properties, both for the isolated dynamics and their feedback interconnection, they may nevertheless exhibit poor transient performance. In this paper, we address this limitation by improving transient performance of (strictly) passive systems through a state-feedback controller and LQR performance criterion.

\subsection{Literature Review}
This literature review focuses on feedback controller design that ensures the closed-loop system is  passive or, equivalently, positive real (PR).
Fig.\ref{fig:structure1} illustrates two feedback configurations prevalent in the literature. 

In the top panel of Fig.\ref{fig:structure1}, the objective is to design a controller $H(s)$  such that the closed-loop is either stable ($W(s)=0$) or passive with respect to $(w,y)$.   
In \cite{LozanoLeal1988}, a strict PR (SPR) output controller $H(s)$ is sought for the PR system $G(s)$ to ensure closed-loop stability. The controller comprises a state observer and a state-feedback controller. 
The output-feedback design from \cite{Su2020} achieves input-feedforward or output-feedback passivity for SISO systems using a predefined  controller structure.
Robust passivation via parameter-dependent static output feedback is addressed in \cite{peaucelle2005robust}, ensuring that the closed-loop system remains passive under plant parameter variations.

\begin{figure}[t]
    \centering
    \begin{subfigure}[t]{0.355\columnwidth}
        \includegraphics[scale=0.73]{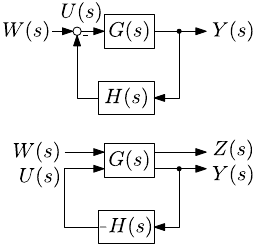}
        \caption{Systems in literature}
        \label{fig:structure1} 
    \end{subfigure}%
    \begin{subfigure}[t]{0.64\columnwidth}
        ~\includegraphics[scale=0.8]{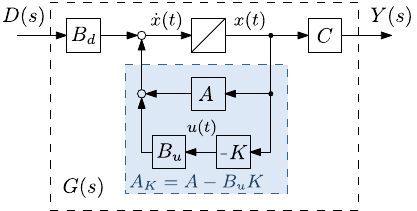}
        \caption{System under consideration}
        \label{fig:structure2} 
    \end{subfigure}
    \vspace{-0.3cm}
    \caption{Block diagrams for different feedback systems}
    \label{fig:structure} 
\end{figure}
In the lower panel of Fig.\ref{fig:structure1}, the PR system utilizes a performance input-output pair $(w,z)$ distinct from the control pair  $(u,y)$. The SPR controller of \cite{Haddad1994} extends the synthesis of \cite{LozanoLeal1988} to include an $H_\infty$-norm constraint on closed-loop performance.
A combined state- and output-feedback design achieving closed-loop SPR is developed in \cite{Sun1994}. 
The problem of minimizing the closed-loop $H_2$-norm while constraining the controller to be SPR is studied in \cite{Geromel1997}, with \cite{shimomura2002strictly} reducing the conservatism via non-common Lyapunov functions.

In this work, the feedback structure under consideration is shown in Fig.~\ref{fig:structure2}. We seek a state-feedback controller that renders the closed-loop system   passive or strictly passive with respect to $(d,y)$, while minimizing the LQR cost of the system $(A,B_u)$. 
To the best of the authors' knowledge, the only existing approach that combines passivity with LQR performance is from \cite{Hallinan}. They show that for a specific class of feedback controllers synthesized from a quadratic storage function, optimality with respect to an LQR-like objective can be established. However, the existence of such a controller is not guaranteed, and the state-penalty matrix  
cannot be freely specified by the user, which thus severely limits the methodology.

\subsection{Contributions}
The contributions are threefold:

\begin{enumerate}
   \item We rigorously analyze the set of state-feedback gains that (strictly) passivate a continuous-time LTI system with respect to an external port, showing that the set is unbounded, possibly nonconvex, path-connected, and contractible.
\item The inherent intractability of LQR optimization under passivity constraints is identified, due to the presence of bilinear matrix inequalities.
   \item We propose an indirect approach that inner-\!\! approximates the set of passivating gains by a compact convex polytope and employs a projected gradient flow to compute a feedback gain that minimizes the LQR cost while guaranteeing (strict) passivity.
\end{enumerate}

\subsection{Notation}
The set of real numbers is denoted by $\mathbb{R}$. 
For a matrix $A$, $A^\top, \operatorname{rank}(A), \operatorname{tr}\left(A  \right), \operatorname{vec}\left(A  \right)$, and $\det (A)$ denote its transpose, rank, trace, vectorization, and determinant, respectively. 
The identity matrix of dimensions $n\times n$ is given by $I_n$. A positive definite (semidefinite) matrix $A$ is denoted by $A\succ 0$ $ (A\succeq 0)$.
The interior, boundary, and closure of a set $\mathcal{S}$ are denoted by $\operatorname{int}(\mathcal{S}), \partial\mathcal{S}$, and $\operatorname{cl}(\mathcal{S})$, respectively.
The operator $\operatorname{diag}(\cdot)$ constructs a diagonal matrix from a vector. The gradient of a scalar function with respect to a vector $x$ (matrix $A$) is denoted by $\nabla f(x)$ $(\nabla f(A))$, and the Jacobian of a vector function $g$ is given by $J_g$.

\section{Preliminaries}
\subsection{Passivity}
Consider the LTI system 
\begin{subequations}
   \label{math:sys} 
   \begin{align}
 \dot{x}(t) & = A x(t) + B v(t), \label{math:lin_sys} \\
 z(t) & = C x(t) + D v(t),
\end{align} 
\end{subequations}
where $x(t)\in\mathbb{R}^n, v(t)\in\mathbb{R}^d$, and $z(t)\in\mathbb{R}^d$ denote the state, input, and output, respectively.
The system \eqref{math:sys} is strictly (state) passive if there exists a continuously differentiable positive semidefinite storage function $S(x)$ and a positive definite function $\psi(x)$ such that 
\begin{align}
   \label{math:diff_ineq} 
 \dot{S}(x(t)) \leq v^\top \!(t)  z(t) - \psi(x(t)).
\end{align} 
Setting $\psi(x) \equiv 0$ recovers the standard (nonstrict) passivity condition \cite[Def.~6.3]{khalil_nonlinear_2002}.

To treat the strict and nonstrict passivity conditions, as well as the cases $D = 0$ and $D \neq 0$, in a unified manner, we introduce the following notation. Define 
\begin{align}
 M &\coloneq \begin{bmatrix}
   A^\top P + P A& PB-C^\top \\ B^\top P - C & - D - D^\top       
 \end{bmatrix}, \label{math:M}  \\
 X \preceq_s 0 \; &\triangleq \;
\begin{cases}
X \prec 0, & \text{for strict passivity} \\
X \preceq 0, & \text{for nonstrict passivity}
\end{cases}, \label{math:X}
\end{align} 
\begin{align}
 L(D)  \coloneq \begin{cases}
   A^\top P + PA \preceq_s 0, \, B^\top P = C ,  & \text{for } D =0 \\
   M \preceq_s 0, & \text{for }  D\neq 0
 \end{cases}, \label{math:L} 
\end{align} 
and $L_{\text{s}}(D)$ and $ L_{ \text{ns} }(D)$ denote the strict and nonstrict versions of $L(D)$, respectively.

\begin{lem}
   \label{lem:KYP} 
Let the system \eqref{math:sys} be controllable and observable. Then, the system \eqref{math:sys} is (strictly) passive if and only if there exists a matrix $P\succ 0$ such that the condition $L(D)$ holds. The associated storage function is $S(x) = \frac{1}{2} x^\top P x$.
\end{lem}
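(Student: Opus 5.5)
Sufficiency is a direct computation with the candidate storage function, and necessity rests on the KYP lemma, so I would treat the two directions separately.

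\emph{Sufficiency.} Assume $P\succ 0$ satisfies $L(D)$ and set $S(x)=\tfrac12 x^\top P x$, which is continuously differentiable and positive definite. Along trajectories of \eqref{math:sys},
\begin{align*}
\dot S - v^\top z = \tfrac12\begin{bmatrix} x\\ v\end{bmatrix}^{\!\top} M \begin{bmatrix} x\\ v\end{bmatrix},
\end{align*}
with $M$ as in \eqref{math:M}; the factor $\tfrac12$ comes from symmetrizing $v^\top z = v^\top C x + v^\top D v$.

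For $D\neq 0$ the condition $M\preceq 0$ immediately gives \eqref{math:diff_ineq} with $\psi\equiv 0$. If instead $M\prec 0$, there is $\varepsilon>0$ with $M\preceq -\varepsilon I$, so \eqref{math:diff_ineq} holds with $\psi(x)=\tfrac{\varepsilon}{2}\|x\|^2$.

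For $D=0$ the bottom-right block of $M$ vanishes and $B^\top P=C$ removes the off-diagonal blocks. Hence $\dot S - v^\top z=\tfrac12 x^\top(A^\top P+PA)x$, and the same argument applies, with strictness coming from $A^\top P+PA\prec 0$.

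\emph{Necessity, nonstrict case.} Here I would invoke the KYP/positive-real lemma, citing \cite{khalil_nonlinear_2002} and \cite{Madeira2016}. Under controllability and observability, passivity is equivalent to positive realness of $G(s)=C(sI-A)^{-1}B+D$, and the positive-real lemma then gives $P\succ 0$, $L$, $W$ with
\begin{align*}
PA+A^\top P=-L^\top L,\quad PB-C^\top=-L^\top W,\quad W^\top W=D+D^\top .
\end{align*}
This is exactly $M=-[L\ W]^\top[L\ W]\preceq 0$. When $D=0$ we get $W=0$, hence $B^\top P=C$. Positive definiteness of $P$ (rather than only semidefiniteness) follows from observability: a quadratic storage vanishing on a nonzero $x$ would force $Cx$-dependent supply contradictions. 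Equivalently, the available storage of an observable passive system is positive definite.

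\emph{Necessity, strict case.} This is the main obstacle, because strict state passivity does not in general coincide with the frequency-domain SPR condition. My plan is to avoid frequency domain entirely. From \eqref{math:diff_ineq}, first show the system is passive with a quadratic storage; for linear systems the available storage $S_a(x)=\sup\{-\int_0^T (v^\top z - \psi)\,dt\}$ is quadratic when $\psi$ is taken quadratic. To secure that, I would first reduce to $\psi(x)\ge \varepsilon\|x\|^2$ on a neighborhood, exploiting linearity and scaling of trajectories so that a local positive-definite bound yields a global quadratic one. Next I would apply the nonstrict result to the perturbed supply $v^\top z-\varepsilon\|x\|^2$, via the KYP lemma for general quadratic supply rates, using controllability. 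This yields $P\succ 0$ with $M+\mathrm{diag}(\varepsilon I,0)\preceq 0$. For $D\neq 0$ I would then shift $P$ slightly, or use $D+D^\top\succ 0$, to obtain strict negativity of $M$. For $D=0$, the structure $B^\top P=C$ must survive, giving $A^\top P+PA\preceq-\varepsilon I\prec 0$.

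The hardest points are this strict-case necessity and the reading of $M\prec 0$ when $D+D^\top$ is singular. There, if needed, I would interpret strictness through the state-only condition as the paper's definition of $L_{\text{s}}$ intends.
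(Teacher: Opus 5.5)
Your sufficiency computation is correct, and more self-contained than the paper. The paper obtains the whole lemma by citing the (strict) positive-real KYP conditions of \cite{caverly2024lmi} together with the equivalence between (strict) positive realness and (strict) passivity of \cite{Madeira2016}. Your nonstrict necessity follows that same route; the PR lemma for minimal realizations already gives $P\succ 0$, so your informal observability argument there is unnecessary.

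The gap is in strict necessity, at the step ``reduce to $\psi(x)\ge\varepsilon\|x\|^2$ on a neighborhood \dots\ so that a local positive-definite bound yields a global quadratic one.'' A positive definite $\psi$ gives no such bound. For example, $\psi(x)=\|x\|^4/(1+\|x\|^4)$ is positive definite, yet $\psi(x)/\|x\|^2\to 0$ both as $x\to 0$ and as $\|x\|\to\infty$. Scaling does not help either. Replacing trajectories $(x,v)$ by $(\lambda x,\lambda v)$ turns $(S,\psi)$ into $(S(\lambda\,\cdot)/\lambda^2,\psi(\lambda\,\cdot)/\lambda^2)$. For such a $\psi$ the margin degenerates as $\lambda\to 0$ and as $\lambda\to\infty$, and an arbitrary $C^1$ storage does not rescale to anything uniform. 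So you never reach dissipativity for the quadratic supply $v^\top z-\varepsilon\|x\|^2$, and your available-storage/KYP step has nothing to act on.

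The implication you need is true: strict passivity with some positive definite $\psi$ and some storage forces a quadratic margin, i.e., strict positive realness. But it is precisely the nontrivial result the paper imports from \cite{Madeira2016}. You can cite it, as you already do in the nonstrict case, and then apply the SPR lemma. Alternatively, give a genuine argument that $G(s-\mu)$ is positive real for some $\mu>0$. Strict passivity already makes $A$ Hurwitz, so evaluating the dissipation inequality over one period of a periodic steady state, where the storage term cancels, is a natural starting point.

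Your final step for $D\neq 0$ also cannot work as described. The $(2,2)$ block of $M$ is $-D-D^\top$, which does not depend on $P$. So no shift of $P$ yields $M\prec 0$ unless $D+D^\top\succ 0$, and strict (state) passivity does not imply that. Take $A=-I_2$, $B=C=I_2$, $D=\begin{bmatrix}0&1\\-1&0\end{bmatrix}$. This realization is minimal, and with $S=\tfrac12\|x\|^2$ it satisfies $\dot S=v^\top z-\|x\|^2$ because $v^\top Dv=0$. Yet $M\not\prec 0$ for every $P$.

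Your hesitation is therefore justified. For $D\neq 0$, the strict ``only if'' direction as stated needs the extra hypothesis $D+D^\top\succ 0$, or $L_{\text{s}}$ must be replaced by the $\mu P$ form of the LMI. The paper's citation-based argument, whose footnote appeals only to $D\succeq 0$, does not address this either. Under $D+D^\top\succ 0$ your last step is immediate and needs no shift of $P$. From $M+\operatorname{diag}(\varepsilon I,0)\preceq 0$ you get $[x;v]^\top M[x;v]\le-\varepsilon\|x\|^2<0$ for $x\neq 0$. For $x=0$, $v\neq 0$ you get $[x;v]^\top M[x;v]=-v^\top(D+D^\top)v<0$.
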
 
Lemma~\ref{lem:KYP} follows from the conditions for (strict) positive realness\footnote{In particular, the conditions are the matrix inequalities associated with the state-space realization appearing in the Kalman-Yakubovich-Popov (KYP) Lemma.}  given in \cite[Sec.~4.6.3]{caverly2024lmi} and from the equivalence between (strict) positive realness and (strict) passivity shown in \cite{Madeira2016}.  

The condition of Lemma~\ref{lem:KYP} is equivalent to the existence of a matrix  $P\succ 0$ and a scalar  $\mu>0$ ($\mu=0)$ such that 
\begin{align}
  \begin{bmatrix}
   A^\top P + P A + \mu P & PB-C^\top \\ B^\top P - C & - D - D^\top       
 \end{bmatrix} \preceq 0,
\end{align} 
corresponding to strict (nonstrict) passivity.\footnote{
The equivalence can be shown using the fact that passivity of system~\eqref{math:sys} requires $D\succeq 0$. }


\subsection{Linear Quadratic Regulator}
The LQR objective is given by
\begin{align}
 f(x,u) = \int_0^\infty x(t)^\top Q x(t) + u(t)^\top R u(t) \mathrm{d}t, 
\end{align} 
where $u(t)\in\mathbb{R}^m$, $Q\succeq 0$, and $R\succ 0$.
The LQR problem is 
\begin{align}
 \min_{x,u} f(x,u) \quad \text{s.t.} \quad \dot{x} = Ax + B_u u  ~\; \text{and}  ~\; x(0) = x_0. \label{math:LQR_opt} 
\end{align} 
\begin{assumption}
   \label{ass:2} 
   The system $(A,B_u)$ is stabilizable and the pair $(A,\sqrt{Q})$ is detectable. 
\end{assumption}
Under Assumption~\ref{ass:2}, the optimal solution to \eqref{math:LQR_opt} is a stabilizing,
time-invariant state-feedback controller $u=-K^*x $ with $K^* = R^{-1}B^\top_u X^*$, where $X^*$ is the unique  positive definite solution of the CARE 
\begin{align}
 A^\top X + X A - XB_u R^{-1}B^\top_u X + Q = 0.
\end{align} 
The LQR problem \eqref{math:LQR_opt} can be parametrized with respect to the feedback gain $K$ as proposed by  \cite{Feron}
\begin{subequations}
    \label{math:LQR_feron}
\begin{align}
    \min_{K,Y}~ &   \operatorname{tr }(QY) + \operatorname{tr }(K^\top R K Y)\\
    \text{s.t.}~ & (A-B_u K) Y + Y (A-B_u K)^\top + x_0x_0^\top \prec 0, \label{math:LQR_feron_stab}\\
    & Y\succ 0,
\end{align}
\end{subequations}
which can be convexified by a change of variables.
Alternatively, the  LQR problem \eqref{math:LQR_opt} can be formulated as
\begin{subequations}
\label{math:LQR_boyd}
\begin{align}
    \max_X ~& \operatorname{tr }(X) \\
    \text{s.t.}~& \begin{bmatrix} A^\top X + X A + Q & XB_u \\ B_u^\top X & R \end{bmatrix} \succ 0, \label{math:LQR_boyd_care}\\
    & X\succ 0.
\end{align}
\end{subequations}
The problems \eqref{math:LQR_feron} and \eqref{math:LQR_boyd} are duals as shown in \cite{Balakrishnan1995}, and admit relaxations to convex semidefinite programs (SDPs) that preserve the optimal solution and can be solved efficiently.

\subsection{Policy Gradient Flow for LQR}
We briefly summarize the policy gradient flow from \cite{bu2020clqr}.
The set of stabilizing feedback gains of system \eqref{math:lin_sys} is denoted by 
\begin{align}
   \mathcal{K} & = \{ K \in \mathbb{R}^{m \times n} \mid \operatorname{Re}(\lambda_i(A-B_uK)) < 0 ~ \forall i \}, \label{math:K}
\end{align}
and is open, unbounded, and path-connected, as shown in \cite[Section 3]{bu2019}.
   The LQR cost function parametrized in terms of $K$ is defined as the matrix function $f_K:\mathbb{R}^{m\times n}\to \mathbb{R}$\footnote{Without loss of generality, we replaced $x_0x_0^\top$ by $I_n$ in the cost function $f_K = x_0^\top X_K x_0 = \operatorname{tr}\left(X_K x_0 x_0^\top \right)$ to obtain an initial-state independent formulation.}
    \begin{align}
     K\mapsto f_K= \operatorname{tr }(X_K), \label{math:cost_K} 
    \end{align} 
    where
    \begin{align}
       X_K & = \int_0^\infty e^{(A-B_uK)^\top t }\left(Q + K^\top R K  \right) e^{(A-B_uK) t}\mathrm{d}t. \label{math:int_P} 
    \end{align} 
 The function $f_K$ exhibits favorable analytic properties for its gradient flow. Its effective domain has interior $\mathcal{K}$, over which $f_K$ is real analytic, coercive, and gradient dominated on each of its sublevel sets. 
The gradient flow of \eqref{math:cost_K} is 
 \begin{align}
   \dot K = -  \nabla f_K,  \quad K(0)  = K_0\in\mathcal{K},  \label{math:grad}
\end{align}
where 
\begin{align}
   \nabla f_K & = 2 \left( R K - B_u^\top X_K \right) Y_K, \label{math:grad_mod}  \\
    Y_K & =  \int_0^\infty e^{(A-B_uK) t } I_n e^{(A-B_uK)^\top t} \mathrm{d}t. \label{math:Y} 
\end{align}
For each $K\in\mathcal{K}$, $X_K$ and $Y_K$ are the unique, positive semidefinite solutions to
the Lyapunov equations 
\begin{align}
    0 & = (A-B_uK)^\top \! X_K \!+\! X_K (A-B_uK) \!+\! Q \!+\! K^\top \! R K, \label{math:Lyap1} \\
    0 & = (A-B_uK) Y_K + Y_K (A-B_uK)^\top + I_n, \label{math:Lyap2}
\end{align}
respectively.
The gradient flow \eqref{math:grad} is well-defined for $K\in\mathcal{K}$ and generates unique trajectories $K(t)$ within $\mathcal{K}$ that converge to the optimal feedback $K^*$. 

\subsection{Projected Gradient Flow}
\label{subsec:proj} 
We provide a brief summary of the projected gradient flow from \cite{jongen2003constrained}.
Consider the inequality-constrained, possibly nonconvex optimization problem 
\begin{align}
   \min_{x\in\mathbb{R}^n}  ~f(x) \quad  \text{s.t.} \quad  g(x) \geq  0, \label{math:opt1}
\end{align}
where $f:\mathbb{R}^n\to \mathbb{R}$ and $g:\mathbb{R}^n\to \mathbb{R}^l$ are smooth functions, i.e., $f,g \in C^\infty(\mathbb{R}^n)$. 
The feasible region is defined by the  set $\mathcal{M}=\{x\in\mathbb{R}^n \mid g(x) \geq 0 \}$ which is assumed to be compact and connected. Additionally, it is assumed that the Linear Independence Constraint Qualification (LICQ) is satisfied at all $x\in \mathcal{M}$. 
The projection of $\nabla f(x)$ onto the tangent cone  $C_x \mathcal{M}$ of $\mathcal{M}$ at $x\in\mathcal{M}$, denoted by $\Pi_\mathcal{M} \nabla f(x)$, is the 
unique solution to 
\begin{align}
   \min_\xi \Vert \xi - \nabla f(x) \Vert_2 \quad \text{s.t.} \quad \xi \in C_x \mathcal{M}. \label{math:proj}
\end{align}
It is explicitly given by $\Pi_\mathcal{M} \nabla f(x)=M(x)\nabla f(x)$, where 
   \begin{align}
      M(x) &= I_n - J_g^\top(x) F(x)^{-1} J_g(x), \label{math:projected_grad} \\
      F(x) &= 2 \operatorname{diag }\left(g\left(x\right)\right) + J_g(x) J_g^\top(x). \label{math:f_x}
   \end{align}
The dynamics of the projected gradient flow is given by 
\begin{align}
   \label{math:proj_grad} 
   \dot x = - \Pi_\mathcal{M} \nabla f(x)= - M(x) \nabla f(x), \quad  x(0)\in \mathcal{M}.  
\end{align}
The following proposition is taken from \cite[Prop.~3.2]{jongen2003constrained}.
\begin{prop}
   \label{prop:invariant}
   The vector field $x\mapsto -\nabla_\mathcal{M} f(x)$ is $C^\infty$ smooth on $\mathcal{M}$, and it induces a smooth trajectory $x(t)$ with $\operatorname{int }(\mathcal{M})$ and $\partial \mathcal{M}$ as invariant manifolds.
\end{prop}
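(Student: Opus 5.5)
The statement splits into two claims: smoothness of the projected field on $\mathcal{M}$, and invariance of $\operatorname{int}(\mathcal{M})$ and $\partial\mathcal{M}$ under the induced trajectories. I would prove them in that order.

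\textbf{Smoothness.} The plan is to show that $F(x)$ in \eqref{math:f_x} is invertible at every $x\in\mathcal{M}$. It then follows that $M(x)$ in \eqref{math:projected_grad}, and hence $x\mapsto -M(x)\nabla f(x)$, is $C^\infty$, being a composition of smooth maps and matrix inversion. Fix $x\in\mathcal{M}$ and let $\alpha=\{i: g_i(x)=0\}$ be the active set.
\begin{itemize}
\item Permute indices so that $F(x)$ has the block form
\begin{align*}
F(x)=\begin{bmatrix} J_\alpha J_\alpha^\top & J_\alpha J_\beta^\top \\ J_\beta J_\alpha^\top & 2\operatorname{diag}(g_\beta)+J_\beta J_\beta^\top\end{bmatrix},
\end{align*}
where $\beta$ is the set of inactive indices and $g_\beta>0$.
\item Take $v=(v_\alpha,v_\beta)$ and compute $v^\top F v=\|J_\alpha^\top v_\alpha+J_\beta^\top v_\beta\|^2+2\sum_{i\in\beta}g_i v_i^2$.
\item If $v^\top F v=0$, then $v_\beta=0$ because each $g_i>0$ on $\beta$. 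Then $J_\alpha^\top v_\alpha=0$, and LICQ gives $v_\alpha=0$.
\end{itemize}
So $F(x)\succ 0$ on $\mathcal{M}$. By continuity, $F$ stays invertible on an open neighborhood of $\mathcal{M}$, which gives smoothness in the usual sense of extending to a neighborhood. One should also confirm that $M(x)\nabla f(x)$ coincides with the tangent-cone projection \eqref{math:proj}. This is quoted as given, so I would only note that I do not need it for invariance.

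\textbf{Invariance of $\partial\mathcal{M}$.} The key identity is obtained by multiplying by $J_g$:
\begin{align*}
J_g M = J_g - J_gJ_g^\top F^{-1}J_g = \bigl(F-J_gJ_g^\top\bigr)F^{-1}J_g = 2\operatorname{diag}(g)F^{-1}J_g .
\end{align*}
Along a trajectory this gives
\begin{align*}
\tfrac{d}{dt} g(x(t)) = -J_g M\nabla f = -2\operatorname{diag}(g(x))\,F^{-1}J_g\nabla f ,
\end{align*}
that is, componentwise $\dot g_i = g_i\,h_i(x)$ with $h_i$ smooth on a neighborhood of $\mathcal{M}$. Along any solution, each $g_i(x(t))$ therefore satisfies the scalar linear ODE $\dot g_i=a_i(t)g_i$ with continuous $a_i$. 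Hence $g_i(x(t))=g_i(x(0))\exp\bigl(\int_0^t a_i\bigr)$.
\begin{itemize}
\item A zero component stays zero for all time.
\item A positive component stays positive for all time.
\end{itemize}
This does three things. It keeps the trajectory inside $\mathcal{M}$, so the trajectory never leaves the region where the field is defined. It preserves the active set exactly, which gives invariance of $\partial\mathcal{M}$. It keeps all $g_i>0$ when they start positive, which gives invariance of $\operatorname{int}(\mathcal{M})$.

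\textbf{Existence and smoothness of the trajectory.} I would combine three facts:
\begin{itemize}
\item local existence and uniqueness (Picard--Lindel\"of), using the smooth extension of the field to a neighborhood;
\item the invariance just shown, which keeps solutions in $\mathcal{M}$;
\item compactness of $\mathcal{M}$, which rules out finite-time escape, so solutions exist for all $t\ge 0$ and are smooth.
\end{itemize}

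\textbf{Main obstacle.} The delicate step is the invertibility of $F$ at boundary points. This is exactly where LICQ is needed, and the block/quadratic-form argument above is the route I would take. A secondary subtlety is the identification $\operatorname{int}(\mathcal{M})=\{g>0\}$ and $\partial\mathcal{M}=\{x\in\mathcal{M}: \exists i,\ g_i=0\}$. This also follows from LICQ: at an active point, moving along $-\nabla g_i$ leaves $\mathcal{M}$, so the point lies on the boundary.
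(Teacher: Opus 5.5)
Your proof is correct. It takes a genuinely different route from the paper, which gives no argument of its own and simply imports the statement from Jongen and Stein.

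\textbf{The cited approach.} The construction behind the cited result can be read off from $F(x)$. It equals the Gram matrix $\widetilde J\widetilde J^\top$ of the lifted equality constraints $g_j(x)-\tfrac12 y_j^2=0$, where $\widetilde J=[\,J_g,\ -\operatorname{diag}(y)\,]$. In that picture:
\begin{itemize}
\item LICQ on $\mathcal{M}$ is exactly regularity of the lifted manifold in $\mathbb{R}^{n+l}$;
\item $-M(x)\nabla f(x)$ is the $x$-part of the orthogonal projection of $(-\nabla f,0)$ onto its tangent space;
\item the $y$-part is $\dot y=-\operatorname{diag}(y)F^{-1}J_g\nabla f$, which leaves each hyperplane $\{y_j=0\}$ invariant.
\end{itemize}

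\textbf{Your approach.} You stay in $\mathbb{R}^n$ throughout. LICQ enters only through your quadratic-form proof that $F\succ 0$. Your identity $J_gM=2\operatorname{diag}(g)F^{-1}J_g$ is the same invariance fact, seen through $g_j=\tfrac12 y_j^2$.

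\textbf{What each buys.} Your version is more elementary and proves slightly more than stated. The whole active set is preserved in both time directions, so each stratum $\{g_\alpha=0,\ g_\beta>0\}$ is invariant. By LICQ, each stratum is a smooth $(n-|\alpha|)$-dimensional manifold. That is the precise meaning of ``invariant manifolds'' here, since $\partial\mathcal{M}$ itself has corners for the polytopes used later in the paper. The lifting, in turn, gives $M(x)$ a geometric meaning as a Riemannian gradient. In the interior, $M(x)=\bigl(I+J_g^\top(2\operatorname{diag}(g))^{-1}J_g\bigr)^{-1}$.

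\textbf{On the tangent-cone identification.} This formula also shows you were right not to lean on the identification with \eqref{math:proj}, because it does not hold in general. At interior points the tangent cone is all of $\mathbb{R}^n$, yet $M(x)\neq I$ whenever $J_g(x)\neq 0$. On the boundary, your own computation shows the field is tangent to the active stratum rather than being a cone projection. So the proposition is really a statement about the explicit field $-M(x)\nabla f(x)$, which is exactly what you analyzed.
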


The optimization problem \eqref{math:opt1} can be locally solved using \eqref{math:proj_grad}, given that the initial state $x(0)$ and the optimal solution $x^*$ reside within the same subset $\partial \mathcal{M}$ or $\operatorname{int }(\mathcal{M})$.\footnote{Here, \textit{locally solved} refers to convergence to a local minimizer, which is global when the minimizer is unique.}

\section{Problem Formulation}
The LTI systems in this work obey the dynamics 
\begin{subequations}
      \label{math:lin_sys_} 
\begin{align}
 \dot x(t) & = A x(t) + B_u u(t) + B_d d(t), \label{math:lin_sys_1}  \\
 y(t) & = C x(t) + D d(t), 
\end{align} 
\end{subequations}
where $u(t)\in\mathbb{R}^{m}$ is the control input, $ d(t)\in\mathbb{R}^p$ is the external input or disturbance, and $ y(t)\in\mathbb{R}^p$ is the output.
The closed-loop system under state-feedback $u=-Kx$ is 
\begin{subequations}
      \label{math:closed_lin} 
\begin{align}
 \dot x(t) & = A_K x(t) + B_d d(t), \\
 y(t) & = C x(t)+D d(t), 
\end{align} 
\end{subequations}
where $A_K \coloneq A-B_uK$.
In contrast to \eqref{math:sys}, the system \eqref{math:closed_lin} has input-output pair $(d,y)$ instead of $(v,z)$.

The following assumption holds throughout the paper. 
\begin{assumption}
   \label{ass:c+o} 
   The system  $(A,B_u)$ is stabilizable and the pair $(A,\sqrt{Q})$ is detectable. For each admissible feedback gain $K$, the system $(A_K,B_d)$ is controllable and the pair $(A_K,C)$ is observable.
\end{assumption}
Controllability of $(A_K,B_d)$ and observability of $(A_K,C)$ establish that any  quadratic storage function must be positive definite, i.e., $S(x)>0$ for $x\neq 0$, and preclude hidden, unstable dynamics that could violate the passivity inequality.\footnote{This ensures that the passivity condition \eqref{math:diff_ineq} holds uniformly across all possible states of the system \eqref{math:closed_lin}.}
The stabilizability of  $(A,B_u)$ and detectability of $(A,\sqrt{Q})$ ensure that the optimal LQR controller $u=-K^*x$ stabilizes the system \eqref{math:lin_sys_1} with $d=0$.

\begin{objective}
   \label{obj} 
   Determine a state-feedback gain $K$ such that the resulting closed-loop system \eqref{math:closed_lin} is (strictly) passive with respect to the input-output pair $(d,y)$, while minimizing the associated LQR cost in the disturbance-free case ($d=0$). Formally,
   \begin{subequations}
      \label{math:obj_prob}
   \begin{align} 
 \min_K~& \int_0^\infty x_0^\top e^{A_K^\top t} (Q + K^\top R K ) e^{A_K t}x_0 \mathrm{d}t \\
 \text{s.t.}~ & \eqref{math:closed_lin}\text{ being (strictly) passive}.
\end{align} 
\end{subequations} 
\end{objective}

For the closed-loop system \eqref{math:closed_lin}, we define $M_K$ and $L(D,K)$ as $M$ and $L(D)$ in \eqref{math:M} and \eqref{math:L}, with $A$ and $B$ replaced by $A_K$ and $B_d$, respectively.

Next, we reformulate Lemma~\ref{lem:KYP} for the closed-loop system.
\begin{prop}
   \label{prop:feas} 
   The system \eqref{math:lin_sys_} with the  state feedback controller $u=-Kx$, resulting in \eqref{math:closed_lin}, is (strictly) passive if and only if there exists matrices $Z\succ 0$ and $W\in\mathbb{R}^{m\times n}$ such that for $D\neq 0$
   \begin{align}
    \underbrace{\begin{bmatrix}Z A^\top + AZ - W^\top B_u^\top - B_u W & B_d - ZC^\top \\ B_d^\top - C Z & -D-D^\top \end{bmatrix}}_{\eqqcolon N_D(Z,W)} \preceq_s 0 \label{math:KYP_K} 
   \end{align} 
   and for $D=0$
   \begin{align}
    \underbrace{Z A^\top + AZ - W^\top B_u^\top - B_u W}_{\eqqcolon N_0(Z,W)} \preceq_s 0 \; \; \text{and}\;  \;  B_d^\top = C Z  \label{math:KYP_K_} 
   \end{align} 
   hold. Then, the feedback gain is $K = W Z^{-1}$ and the corresponding storage function is $S(x) = \frac{1}{2}x^\top Z^{-1} x$.
\end{prop}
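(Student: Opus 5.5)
Apply Lemma~\ref{lem:KYP} to the closed-loop system \eqref{math:closed_lin}. By Assumption~\ref{ass:c+o}, $(A_K,B_d)$ is controllable and $(A_K,C)$ is observable. Hence \eqref{math:closed_lin} is (strictly) passive if and only if there exists $P\succ 0$ such that $L(D,K)$ holds, i.e., $M_K\preceq_s 0$ for $D\neq 0$, or $A_K^\top P+PA_K\preceq_s 0$ together with $B_d^\top P=C$ for $D=0$. The rest of the argument is a congruence transformation followed by the linearizing change of variables $Z=P^{-1}$, $W=KZ$. This change is a bijection between pairs $(P,K)$ with $P\succ 0$ and pairs $(Z,W)$ with $Z\succ 0$, with inverse $P=Z^{-1}$, $K=WZ^{-1}$. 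That bijection gives both directions of the equivalence and the stated formulas for $K$ and $S$.

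For $D\neq 0$, multiply $M_K$ from both sides by the nonsingular symmetric matrix $T=\operatorname{diag}(Z,I_p)$. Congruence preserves both $\prec 0$ and $\preceq 0$, so $M_K\preceq_s 0$ if and only if $TM_KT\preceq_s 0$. The $(1,1)$ block becomes $Z(A_K^\top P+PA_K)Z=ZA_K^\top+A_KZ$. Substituting $A_K=A-B_uK$ and $KZ=W$ turns this into $ZA^\top+AZ-W^\top B_u^\top-B_uW$. The off-diagonal block becomes $Z(PB_d-C^\top)=B_d-ZC^\top$, and its transpose is $B_d^\top-CZ$. The $(2,2)$ block $-D-D^\top$ is unchanged. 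The transformed matrix is therefore exactly $N_D(Z,W)$.

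For $D=0$, apply the congruence with $Z$ alone: $A_K^\top P+PA_K\preceq_s 0$ if and only if $N_0(Z,W)\preceq_s 0$. Right-multiplying the equality $B_d^\top P=C$ by $Z$ gives $B_d^\top=CZ$, and the step reverses by right-multiplying with $P$. The storage function is $\tfrac12 x^\top Px=\tfrac12 x^\top Z^{-1}x$, as claimed.

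The main subtlety is the direction ``LMI feasible $\Rightarrow$ passive''. Assumption~\ref{ass:c+o} is stated for admissible gains, so we must make sure that $K=WZ^{-1}$ is admissible, meaning it satisfies the controllability and observability hypotheses of Lemma~\ref{lem:KYP}. I would handle this by reading Assumption~\ref{ass:c+o} as holding for every gain under consideration, including those produced by the LMI. Alternatively, one can note that the sufficiency direction of the KYP-based passivity argument does not need minimality: with $S=\tfrac12 x^\top Px$, the inequality $M_K\preceq_s 0$ directly yields $\dot S\le d^\top y-\psi(x)$. In the strict case one takes $\psi(x)=\tfrac12\varepsilon\|x\|^2$ for some small $\varepsilon>0$, using $\mu P$ from the equivalent form stated after Lemma~\ref{lem:KYP}.
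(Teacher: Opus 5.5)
Your proof is correct and follows the paper's route: apply Lemma~\ref{lem:KYP} to the closed loop, use the congruence $\operatorname{diag}(P^{-1},I_p)$, and substitute $Z=P^{-1}$, $W=KZ$. Your direct treatment of $D=0$ from the definition of $L(0,K)$ is cleaner than the paper's detour through a block matrix with zero $(2,2)$ block, which cannot be negative definite in the strict case, and your remarks on bijectivity and on sufficiency without minimality spell out what the paper leaves implicit.
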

\begin{proof}
   \hspace{-0.1cm}Applying the congruent transformation $\operatorname{diag} (P^{-1}\!\!,I_p)$  to $M_K\preceq_s 0$ and substituting $Z = P^{-1}$ yields
   \begin{align}
    \begin{bmatrix}Z A^\top \! + A Z - ZK^\top \!B_u^\top \!- B_u KZ & B_d - ZC^\top \\ B_d^\top - CZ & 0\end{bmatrix}\preceq_s 0. \label{math:KYP_feas} 
   \end{align} 
   Substituting $W \! = \! KZ$ in \eqref{math:KYP_feas} gives \eqref{math:KYP_K_}. Similar substitutions in $L(D\neq 0,K)$ yield~\eqref{math:KYP_K}.
\end{proof}

Proposition~\ref{prop:feas} extends Lemma~\ref{lem:KYP} to characterize feedback gains $K$ that render the closed-loop system (strictly) passive. The change of variables $(P,K)\to (Z,W)=(P^{-1}, KP^{-1})$ convexifies the conditions \eqref{math:L} to \eqref{math:KYP_K} and  \eqref{math:KYP_K_}. 
The sets of stabilizing, passivating, and strictly passivating feedback gains are defined as
\begin{align}
   \mathcal{K} & \coloneq \{ K \in \mathbb{R}^{m \times n} \mid \operatorname{Re}(\lambda_i(A-B_uK)) < 0 ~ \forall i \} \label{math:K_} \\
   & ~= \{K \in \mathbb{R}^{m \times n} \mid \exists X\succ 0: A_K^\top X + X A_K \prec 0\}, \label{math:K__} \\
   \mathcal{P} & \coloneq \{ K \in \mathbb{R}^{m \times n} \mid
   \exists P \succ 0: L_{\text{ns}}(D,K)
   \}, \label{math:P_} \\
   \mathcal{P}^+ \! & \coloneq\{ K \in \mathbb{R}^{m \times n} \mid \exists P\succ 0 : L_{\text{s}}(D,K)
   \}, \label{math:P_+} 
\end{align}
and are referred to as the stability region and (strict) passivity region, respectively. The set $\mathcal{P}^{(+)}\in\{\mathcal{P}, \mathcal{P}^+\}$ is used to represent $\mathcal{P}$ for passivity or $\mathcal{P}^+$ for strict passivity, depending on the context.
Note that $\mathcal{K}\neq \emptyset$ due to the stabilizability of $(A,B_u)$ as stated in Assumption~\ref{ass:c+o}.

The following Lemma establishes the inclusion relations among the sets introduced above.
\begin{lem}
   \label{lemma:relation} 
   It holds that $\mathcal{P}\subseteq  \operatorname{cl}(\mathcal{K})=\mathcal{K} \cup \partial \mathcal{K}$ and $\mathcal{P}^+\subseteq \mathcal{K}$.
\end{lem}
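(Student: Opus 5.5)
The plan is to read stability information directly off the $(1,1)$-block of the KYP conditions. Both cases $D=0$ and $D\neq 0$ of $L(D,K)$ are handled at once, using the fact that in either case $L(D,K)$ forces $A_K^\top P + PA_K \preceq_s 0$ with $P\succ 0$.

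\emph{Step 1 (extracting the Lyapunov inequality).} For $D=0$ the inequality $A_K^\top P + P A_K \preceq_s 0$ is part of the definition of $L(D,K)$. For $D\neq 0$, the condition is $M_K\preceq_s 0$. A principal submatrix of a negative (semi)definite matrix is again negative (semi)definite; to see this, test $M_K$ with vectors of the form $(\xi^\top,0)^\top$. Hence the $(1,1)$-block satisfies $A_K^\top P + PA_K \preceq_s 0$.

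\emph{Step 2 (strict case).} For $K\in\mathcal{P}^+$ there is $P\succ 0$ with $A_K^\top P + PA_K \prec 0$. By the Lyapunov characterization \eqref{math:K__}, this gives $K\in\mathcal{K}$, so $\mathcal{P}^+\subseteq\mathcal{K}$.

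\emph{Step 3 (nonstrict case).} For $K\in\mathcal{P}$ there is $P\succ 0$ with $A_K^\top P + PA_K \preceq 0$. Then $V(x)=x^\top P x$ is nonincreasing along $\dot x = A_K x$, so every trajectory is bounded. It follows that $\operatorname{Re}\lambda_i(A_K)\le 0$ for all $i$, and that eigenvalues on the imaginary axis are semisimple.

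Next I show $K\in\operatorname{cl}(\mathcal{K})$ by a perturbation. The matrix $A_{K}-\varepsilon I$ satisfies $(A_K-\varepsilon I)^\top P + P(A_K-\varepsilon I)\preceq -2\varepsilon P\prec 0$, but it is not of the form $A-B_uK'$ in general, so the perturbation has to go through $K$ itself. The cleanest route is the convex parametrization of Proposition~\ref{prop:feas}. Stabilizability gives some $K_s\in\mathcal{K}$, and hence a pair $(Z_s,W_s)$ with $Z_s\succ 0$, $W_s=K_sZ_s$, and $N_0(Z_s,W_s)\prec 0$ (taking $Z_s=X^{-1}$ from \eqref{math:K__}). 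Set $Z=P^{-1}$, $W=KZ$, so that $N_0(Z,W)\preceq 0$ by congruence. Then, for $\theta\in(0,1]$, the convex combination $(Z_\theta,W_\theta)=((1-\theta)Z+\theta Z_s,(1-\theta)W+\theta W_s)$ satisfies $Z_\theta\succ 0$ and $N_0(Z_\theta,W_\theta)\prec 0$, because $N_0$ is affine in $(Z,W)$. Thus $K_\theta = W_\theta Z_\theta^{-1}\in\mathcal{K}$, and $K_\theta\to K$ as $\theta\to 0$ by continuity of inversion. This shows $K\in\operatorname{cl}(\mathcal{K})$. Finally, $\operatorname{cl}(\mathcal{K})=\mathcal{K}\cup\partial\mathcal{K}$ holds for any set; since $\mathcal{K}$ is open, $\partial\mathcal{K}=\operatorname{cl}(\mathcal{K})\setminus\mathcal{K}$.

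The main obstacle is Step 3. Nonstrict Lyapunov inequalities only give marginal stability, so the membership in the closure has to be produced by an explicit approximating sequence that stays inside the affinely parametrized family $A-B_uK'$. The convexity of $N_0$ in $(Z,W)$, combined with a stabilizing anchor from Assumption~\ref{ass:c+o}, is what resolves this.
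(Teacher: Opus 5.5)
Your argument is correct. Steps 1 and 2 are the paper's proof: it also takes $A_K^\top P+PA_K\preceq_s 0$ from the definition when $D=0$, and from the principal $(1,1)$-block of $M_K$ when $D\neq 0$. It then reads off asymptotic stability in the strict case.

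The difference is in the nonstrict case. The paper stops at ``$A_K$ is at least marginally stable, which implies $\mathcal{P}\subseteq\operatorname{cl}(\mathcal{K})$.'' This tacitly identifies $\operatorname{cl}(\mathcal{K})$ with the gains whose closed-loop spectrum lies in the closed left half-plane. It never shows that such a $K$ is a limit of Hurwitz gains within the restricted family $A-B_uK'$. Your convex combination in $(Z,W)$ with a stabilizing anchor $(Z_s,W_s)$ supplies exactly this step.

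Your route also shows that stabilizability of $(A,B_u)$ is genuinely used here, which the paper's proof never mentions. Take $n=m=p=1$, $A=0$, $B_u=0$, $B_d=C=1$, $D=0$: every $K$ is passivating with $P=1$, while $\mathcal{K}=\emptyset$. So the paper's route buys brevity, and yours buys completeness.

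The first paragraph of your Step 3 (bounded trajectories, semisimple imaginary-axis eigenvalues) is never used and can be dropped. An equally short alternative without an anchor is to show $K+\varepsilon B_u^\top P\in\mathcal{K}$ for every $\varepsilon>0$. That follows from a PBH argument which again relies on stabilizability.
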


\begin{proof}
   First, consider the case $D=0$.
   By definition, for all $K\in  \mathcal{P}^{(+)}$, there exists a $P\succ 0$ such that $A_K^\top P + P  A_K \preceq_s 0$. Thus, $A_K$ is at least marginally stable for passivity and asymptotically stable for strict passivity, which implies $\mathcal{P}\subseteq \operatorname{cl }(\mathcal{K})$ and $\mathcal{P}^{+ }\subseteq \mathcal{K}$. Now, consider the case $D\neq 0$. According to \cite[Observation~7.1.2]{horn2012matrix}, all principal submatrices of a negative (semi-)definite matrix are negative (semi-)definite. Thus, for any $K\in\mathcal{P}^{(+)}$, $M_K \preceq_s 0$ implies $A_K^\top P + P  A_K \preceq_s 0$, 
   and the same conclusions as in the case $D=0$ follow.  
\end{proof}

It is well established that (strict) passivity implies (asymptotic) stability for $d=0$, as stated in \cite[Lemmas~6.6-6.7]{khalil_nonlinear_2002}.
Lemma~\ref{lemma:relation} restates this property for a family of closed-loop systems parametrized by the feedback~$K$.

With the introduced notation, the controller sought in Objective~\ref{obj} coincides with the solution of
\begin{subequations}
   \label{math:opt_1} 
   \begin{align}
   \min_{K,X_K} ~& \operatorname{tr}\left( X_K \right) \\
   \text{s.t.}~& A_K^\top X_K + X_K A_K + Q + K^\top R K = 0, \label{math:c1} \\
   & K\in \mathcal{K}\cap \mathcal{P}^{(+)}, 
   \end{align} 
\end{subequations}
where the LQR cost \eqref{math:cost_K} of the system $(A,B_u)$ is minimized subject to the passivity constraint $K \!\in \!\mathcal{P}^{(+)}$.\footnote{Note that $K\in\mathcal{K}$ implies that the solution $X_K$ of \eqref{math:c1} is unique and positive semidefinite, leading to bounded objective value.}
Since the sets $\mathcal{K}$ and $\mathcal{P}$ are challenging to characterize explicitly, the problem \eqref{math:opt_1} is not tractable in the current form.


We assume the following for well-posedness and feasibility of \eqref{math:opt_1}, which holds throughout this work.

\begin{assumption}
   \label{ass:3} 
    The intersection $\mathcal{K}\cap \mathcal{P}^{(+)}\neq \emptyset$ is nonempty.
\end{assumption}

\begin{remark}
   \label{remark:int} 
   Assumption~\ref{ass:3} can be verified with Proposition~\ref{prop:feas} for strict passivity since $\mathcal{P}^+ \neq \emptyset$ implies $\mathcal{K}\cap \mathcal{P}^{+}\neq \emptyset$ due to Lemma~\ref{lemma:relation}.    
   For nonstrict passivity, Proposition~\ref{prop:feas} only ensures $\operatorname{cl}(\mathcal{K}) \cap \mathcal{P} \neq \emptyset$.   
   If $K \in \partial \mathcal{K}$, $A_K$ is marginally stable, causing the LQR cost \eqref{math:cost_K} to be unbounded, as the integral \eqref{math:int_P} diverges. In this case, feasibility holds, but the problem is ill-defined due to unboundedness.
   
\end{remark}

Next, we present two explicit reformulations of problem \eqref{math:obj_prob} or \eqref{math:opt_1}. Combining the LQR problems \eqref{math:LQR_feron}, \eqref{math:LQR_boyd}, and the passivity constraint $L(D,K)$ yields
\begin{subequations}
   \label{math:nonconv_1} 
\begin{align}
    \min_{K,Y,P}~ &   \operatorname{tr }(QY) + \operatorname{tr }(K^\top R K Y)\\
    \text{s.t.}~ & (A-B_u K) Y + Y (A-B_u K)^\top + I_n \preceq 0, \label{math:nonconv_1_c1}  \\
    &  L(D,K) ,  \label{math:nonconv_1_c2}\\
    & Y\succeq 0, ~ P\succeq 0,
\end{align}
\end{subequations}
\vspace{-0.02cm}and 
\begin{subequations}
     \label{math:nonconv_2} 
\begin{align}
    \max_{X,P,K} ~& \operatorname{tr }(X) \\
    \text{s.t.}~& \begin{bmatrix} A^\top X + X A + Q & XB_u \\ B_u^\top X & R \end{bmatrix} \succeq 0,\label{math:nonconv_2_c1} \\
    &  L(D,K), \label{math:nonconv_2_c2}  \\
   & K = R^{-1} B_u^\top X,  \label{math:nonconv_2_c3} \\
    & X\succeq 0,  ~P\succeq 0,
\end{align}
\end{subequations}
where the constraint $L(D,K)$ depends on the matrix $P$.
The next two propositions establish that neither problem \eqref{math:nonconv_1} nor \eqref{math:nonconv_2} can be reformulated as a convex problem.

\begin{prop}
   \label{prop:nonconv1} 
   The optimization problem \eqref{math:nonconv_1} admits no convex reformulation.
\end{prop}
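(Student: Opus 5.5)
The statement is an informal impossibility claim, so the plan is to pin down ``admits no convex reformulation'' concretely. I will read it as: there is no change of variables of the kind used for \eqref{math:LQR_feron} and Proposition~\ref{prop:feas} (a bijection of the decision variables, e.g.\ $(K,Y)\mapsto(W,Y)=(KY,Y)$ or $(K,P)\mapsto(W,Z)=(KP^{-1},P^{-1})$) that turns both \eqref{math:nonconv_1_c1} and \eqref{math:nonconv_1_c2} into LMIs simultaneously. The proof then proceeds by showing that the two convexifying substitutions are incompatible, and by exhibiting a product term that survives any admissible substitution.

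\textbf{Step 1.} Expand the constraints to exhibit their bilinear structure. Constraint \eqref{math:nonconv_1_c1} contains $B_uKY + YK^\top B_u^\top$, which is bilinear in $(K,Y)$. It is convexified only by the substitution $W_1 = KY$, and the cost term $\operatorname{tr}(K^\top RKY)=\operatorname{tr}(W_1^\top R W_1 Y^{-1})$ then becomes convex through a Schur complement. Constraint \eqref{math:nonconv_1_c2}, i.e.\ $L(D,K)$, contains $PB_uK + K^\top B_u^\top P$, bilinear in $(K,P)$. By Proposition~\ref{prop:feas} it is linearized only by $Z=P^{-1}$ and $W_2=KZ$.

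\textbf{Step 2.} Show that one gain $K$ cannot serve both substitutions unless $Y$ and $P^{-1}$ coincide up to a scalar. The plan is to write $K=W_1Y^{-1}=W_2Z^{-1}$. Re-expressing \eqref{math:nonconv_1_c2} in the variables $(W_1,Y,Z)$ gives the term $B_uW_1Y^{-1}Z$, and re-expressing \eqref{math:nonconv_1_c1} in $(W_2,Z,Y)$ gives $B_uW_2Z^{-1}Y$. Each of these is a product of independent matrix variables, so the resulting constraint set is not convex in general. Imposing $Y=Z$ (a common Lyapunov/storage matrix) would restore linearity, but it is an additional restriction. It changes the feasible set, because the LQR Gramian $Y$ is fixed by \eqref{math:nonconv_1_c1} and the cost, while $Z$ must satisfy the equality $CZ=B_d^\top$ (for $D=0$) or the off-diagonal coupling in \eqref{math:KYP_K}. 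It therefore yields an upper bound on the optimal value, not a reformulation.

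\textbf{Step 3.} Make nonconvexity concrete with a counterexample. I would take a scalar system ($n=m=p=1$, $D=0$) and compute the feasible set in $(K,Y,P)$ together with its image under every candidate substitution. The target is two feasible points whose midpoint violates the constraints: for $D=0$ the equality $B_d=CZ$ pins $Z$ down, while \eqref{math:nonconv_1_c1} forces $Y\ge 1/(2(B_uK-A))$. The aim is to show that the projection onto $(K,Y,P)$ is nonconvex in the original variables. The more general claim, that no reformulation exists, would rest on the contractibility and possible nonconvexity of $\mathcal{P}^{(+)}$ analyzed later; here I would cite that $\mathcal{P}^{(+)}$ can be nonconvex in $K$, so no reformulation keeping $K$ as an affine function of the new variables can be convex.

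\textbf{Main obstacle.} The hard step is making ``no convex reformulation'' rigorous, since arbitrary nonlinear reparametrizations can convexify many sets. I would restrict explicitly to the standard class of congruence transformations and substitutions $W=KV$ with $V\succ0$ a single Lyapunov-type variable. Within that class, the argument reduces to showing that one $V$ must equal both $Y$ and $P^{-1}$, which is an extra restriction. That reduction is Step 2, and it is the crux of the proof.
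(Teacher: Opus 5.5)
Your Steps 1--2 are essentially the paper's proof. The paper introduces $F=KY$, $Z=P^{-1}$, $W=KZ$, notes that \eqref{math:nonconv_1_c1} and \eqref{math:nonconv_1_c2} become LMIs in $(Y,F)$ and $(Z,W)$ separately, and handles the cost by a Schur complement. It then locates the obstruction in the consistency condition $FY^{-1}=WZ^{-1}$, i.e.\ the bilinear equality $FZ=WY$, and argues at the same informal level as you do.

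The gap is in Step~3, which would not work as planned. For $n=m=p=1$, $D=0$, the equality $B_dP=C$ fixes $P=C/B_d$. For $C/B_d>0$ the passivity condition then reduces to $A-B_uK\le 0$, which \eqref{math:nonconv_1_c1} already implies. Constraint \eqref{math:nonconv_1_c1} itself reads $(B_uK-A)Y\ge\tfrac12$, $Y\ge 0$, which is a convex hyperbolic region in $(K,Y)$. So the feasible set in $(K,Y,P)$ is convex, and no two feasible points have an infeasible midpoint. With $F=KY$ the whole instance (cost $QY+RF^2/Y$, constraint $2AY-2B_uF+1\le 0$) is a convex program. This instance therefore \emph{has} a convex reformulation, so the proposition can only be meant generically. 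It also breaks your Step~2 claim that one $V$ must equal both $Y$ and $P^{-1}$. A scalar Lyapunov inequality is invariant under multiplication by any $V>0$; equivalently, the paper's bilinear equality $FZ=WY$ can simply be eliminated once $Z$ is a fixed scalar. Any nonconvexity witness must be genuinely matrix-valued.

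The fallback through nonconvexity of $\mathcal{P}^{(+)}$ does not support the claim either. The implication ``projection onto $K$ nonconvex $\Rightarrow$ no convex reformulation'' is valid only when $K$ is affine in the new variables. That class excludes exactly the reformulations at stake:
\begin{itemize}
\item Proposition~\ref{prop:feas} convexifies the passivity constraint even though $\mathcal{P}^{(+)}$ is nonconvex.
\item $F=KY$ convexifies \eqref{math:LQR_feron} even though $\mathcal{K}$ is nonconvex in general.
\end{itemize}
So this argument cannot separate the coupled problem from the two individually convexifiable ones, and that separation is the whole content of the proposition. Contractibility is no obstruction at all, since every set homeomorphic to a convex set is contractible.

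Even within your restricted class, the witnesses from the paper's nonconvexity example (where $A_K=I-K$) fail as stated. They satisfy $\det(I-\tilde K_1)=\det(I-\tilde K_2)=0$, so $A_K$ has a zero eigenvalue. That violates \eqref{math:nonconv_1_c1}, which forces $A_K$ to be Hurwitz. You would need to shift them to $\tilde K_i+\delta I$ with $0<\delta<\tfrac14$; then both stay passivating and stabilizing while the midpoint still fails. If you drop Step~3, what remains is the paper's argument.
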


\begin{proof}
   Constraints \eqref{math:nonconv_1_c1} and \eqref{math:nonconv_1_c2} are BMIs and coupled through the same controller gain $K$. Introducing new variables $F=KY, Z=P^{-1}$, and $W = K Z$ yields LMIs in $(Y,F)$ and $(Z,W)$, respectively. However, consistency requires $F Y^{-1} = W Z^{-1}$, equivalent to the bilinear matrix equality (BME) $FZ = W Y$. This equality is inherently nonconvex and cannot be convexified while maintaining problem equivalence. Although $ \operatorname{tr}\left(K^\top R K Y \right)$ admits a convex reformulation via Schur complement (\cite{Feron}), the essential nonconvexity persists due to the inseparable bilinear coupling between the constraints.   
\end{proof}

\begin{prop}
   The optimization problem \eqref{math:nonconv_2} admits no convex reformulation.
\end{prop}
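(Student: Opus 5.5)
I will argue in the same spirit as the proof of Proposition~\ref{prop:nonconv1}: identify the bilinear terms in \eqref{math:nonconv_2}, show that the changes of variables that linearize each constraint individually are incompatible, and conclude that an irreducible bilinear coupling remains.

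First I substitute the equality \eqref{math:nonconv_2_c3} into the passivity constraint \eqref{math:nonconv_2_c2}, which eliminates $K$. The closed-loop matrix then becomes $A_K = A - B_uR^{-1}B_u^\top X$. For $D=0$, the constraint $L(D,K)$ reads
\begin{align*}
A^\top P + PA - XB_uR^{-1}B_u^\top P - PB_uR^{-1}B_u^\top X \preceq_s 0, \qquad B_d^\top P = C.
\end{align*}
For $D\neq 0$, the same $(1,1)$ block appears inside $M_K$. These terms are products of the two decision variables $X$ and $P$, so the constraint is a BMI in $(X,P)$. Constraint \eqref{math:nonconv_2_c1}, by contrast, is an LMI in $X$, and the objective is linear.

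Second, I examine the natural linearizing substitutions. As in Proposition~\ref{prop:feas}, the change $Z=P^{-1}$, $W=KZ$ turns $L(D,K)$ into the LMI \eqref{math:KYP_K} or \eqref{math:KYP_K_} in $(Z,W)$. The link to the LQR part, however, becomes $W = R^{-1}B_u^\top X Z$, equivalently $RW = B_u^\top X Z$, which is a bilinear matrix equality in $(X,Z)$. Alternatively, keeping $P$ and introducing $V = PB_uR^{-1}B_u^\top X$ linearizes the $(1,1)$ block. The definition of $V$, however, is again a BME, and the constraint \eqref{math:nonconv_2_c1} does not involve $V$. Any substitution that linearizes $L(D,K)$ must absorb the product of $P$ (or $P^{-1}$) with $X$, while \eqref{math:nonconv_2_c1} and the objective $\operatorname{tr}(X)$ require $X$ itself to enter affinely. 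These two requirements conflict, so the coupling persists.

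Third, to make ``admits no convex reformulation'' concrete rather than heuristic, I would show that the feasible set projected onto $X$ is nonconvex for some instance. Concretely, I would take a scalar example ($n=m=p=1$, $D=0$) in which $P=C/B_d$ is fixed and the stability-type condition reduces to $A - B_u^2X/R \le 0$. In this instance the feasible set is convex, so the scalar case does not exhibit nonconvexity and the example must be chosen more carefully. I expect this to be the main obstacle. An honest rigorous counterexample likely needs $n\ge 2$, for instance two values $X_1,X_2$ satisfying all constraints with suitable $P_1,P_2$ whose midpoint admits no $P$. I would search for such a pair numerically and then verify it symbolically. Failing that, I would settle, as the paper does for Proposition~\ref{prop:nonconv1}, for the structural argument of the second step: the constraint set is described by a BMI coupled to an LMI through $X$, and no equivalence-preserving change of variables removes the bilinearity.
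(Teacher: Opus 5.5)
Your first two steps are the paper's argument: $Z=P^{-1}$, $W=KZ$ turns $L(D,K)$ into an LMI but converts \eqref{math:nonconv_2_c3} into the BME $RW=B_u^\top XZ$. Substituting $K=R^{-1}B_u^\top X$ directly instead leaves bilinear products of $X$ with $P$ (or $Z$), and absorbing these into a new variable either decouples it from \eqref{math:nonconv_2_c1} or introduces another BME. So your proposal matches the paper's structural (not fully rigorous) proof. Your optional third step is not needed, and a nonconvex feasible set in $X$ would not by itself rule out a convex reformulation anyway: the paper's LQR problem \eqref{math:LQR_feron} is nonconvex in $K$ yet convexifiable by a change of variables, and a linear objective such as $\operatorname{tr}(X)$ attains the same optimum over a set as over its convex hull.
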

\begin{proof}
   Introducing the variables $Z=P^{-1}, W = KZ$, transforms \eqref{math:nonconv_2_c2} into an LMI. Then, the equality \eqref{math:nonconv_2_c3} results in a BME $RW =  B_u^\top X Z$ that cannot be convexified. On the other hand, substituting $K = R^{-1} B_u^\top X$ into \eqref{math:nonconv_2_c2} still results in a BMI that contains bilinear terms $ZX$ and $XZ$. 
   These terms cannot be substituted by a new variable $S=ZX$ without losing the coupling of  \eqref{math:nonconv_2_c1} and \eqref{math:nonconv_2_c2} or introducing a new BME $S=ZX$.
\end{proof}

In summary, although the LQR problem and passivity constraints each admit a convex reparameterization individually, their coupling through the common feedback gain 
$K$ introduces bilinearities, rendering \eqref{math:nonconv_1} and \eqref{math:nonconv_2} inherently nonconvex.   
Due to these bilinearities, multiple optimal feedback gains may exist,  but at least one solution exists under Assumptions~\ref{ass:c+o} and \ref{ass:3}.

\subsection{Topological Properties of the Passivity Region}
Next, we explore the topological properties of the set $\mathcal{P}^{(+)}$. Denote the convex reparameterizations of the solution set of $L(D,K)$ for  the cases $D\neq 0$ and $D =0$ as 
\begin{align}
 \mathcal{L}^{(+)}_D & \!\coloneq \{Z\in\mathbb{R}^{n\times n}, W\in\mathbb{R}^{m\times n}\!\mid \!Z\succ0, \eqref{math:KYP_K} \text{ holds}\}, \\
  \mathcal{L}^{(+)}_0 & \!\coloneq \{Z\in\mathbb{R}^{n\times n}, W\in\mathbb{R}^{m\times n}\!\mid\! Z\succ0,\eqref{math:KYP_K_}\text{ holds}\}. 
\end{align}

\begin{lem}
   The set $\mathcal{P}^{(+)}$ is unbounded.
\end{lem}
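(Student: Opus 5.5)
The plan is to exhibit an explicit ray of gains inside $\mathcal{P}^{(+)}$ whose norm grows without bound. By Assumption~\ref{ass:3}, $\mathcal{P}^{(+)}\neq\emptyset$. So I fix some $K_0\in\mathcal{P}^{(+)}$ together with a certificate: $P\succ 0$ satisfying $L(D,K_0)$, or equivalently, by Proposition~\ref{prop:feas}, a pair $(Z,W)\in\mathcal{L}^{(+)}_D$ or $\mathcal{L}^{(+)}_0$ with $K_0=WZ^{-1}$ and $Z=P^{-1}$. The idea is that feedback along the direction $B_u^\top P$ only adds dissipation, measured with respect to the same storage function.

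\textbf{Construction.} For $s\ge 0$ I set $W_s \coloneq W + s B_u^\top$, so that $K_s \coloneq W_s Z^{-1} = K_0 + s B_u^\top P$. A direct computation gives
\begin{align*}
N_0(Z,W_s) = N_0(Z,W) - 2s\, B_u B_u^\top .
\end{align*}
For $D\neq 0$, this means $N_D(Z,W_s)$ equals $N_D(Z,W)$ with only its upper-left block shifted by $-2s B_uB_u^\top\preceq 0$; the off-diagonal blocks $B_d - ZC^\top$ and the block $-D-D^\top$ are unchanged. For $D=0$, the equality constraint $B_d^\top = CZ$ does not involve $W$, so it remains satisfied. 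Since adding a negative semidefinite matrix preserves both $\prec 0$ and $\preceq 0$, the pair $(Z,W_s)$ satisfies \eqref{math:KYP_K} (respectively \eqref{math:KYP_K_}) in the same strict or nonstrict sense. Proposition~\ref{prop:feas} then gives $K_s\in\mathcal{P}^{(+)}$ for all $s\ge 0$, with the same storage function $\tfrac12 x^\top P x$. Equivalently, in the original variables, $A_{K_s}^\top P + P A_{K_s} = A_{K_0}^\top P + PA_{K_0} - 2s\,PB_uB_u^\top P$, while $PB_d - C^\top$ is untouched.

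\textbf{Unboundedness.} I would then bound $\|K_s\| \ge s\|B_u^\top P\| - \|K_0\|$, which tends to $\infty$ provided $B_u^\top P\neq 0$. Because $P$ is invertible, this holds whenever $B_u\neq 0$. In the degenerate case $B_u=0$, the closed loop does not depend on $K$, so $\mathcal{P}^{(+)}$ is either empty or all of $\mathbb{R}^{m\times n}$. By Assumption~\ref{ass:3} it is nonempty, hence it is the whole space and unbounded.

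The only point needing care is verifying that the semidefinite perturbation preserves the constraint in both the strict and the nonstrict variants, and for both cases $D=0$ and $D\neq 0$. This is immediate from the block structure shown above, so I do not expect a genuine obstacle. As a remark, the same ray also lies in $\mathcal{K}$ in the strict case, by Lemma~\ref{lemma:relation}.
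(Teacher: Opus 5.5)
Your proposal is correct and follows the paper's proof essentially step for step. You use the same ray $W_s = W + sB_u^\top$, i.e.\ $K_s = K_0 + sB_u^\top P$, keep the same storage function, and observe, as the paper does, that only the upper-left block of the LMI changes, by $-2sB_uB_u^\top \preceq 0$, so both the strict and nonstrict conditions are preserved. Your separate treatment of the degenerate case $B_u = 0$, where $\mathcal{P}^{(+)}$ is then the whole space, is a small improvement, since the paper simply takes $B_u \neq 0$ for granted.
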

\begin{proof}
   First, consider the case $D\neq 0$. Since $\mathcal{P}^{(+)}$ is nonempty by Assumption~\ref{ass:3}, fix $K_0 \in \mathcal{P}^{(+)}$ with associated $P_0$ satisfying $M_{K_0}\preceq_s 0$. Let $Z_0 = P_0^{-1}$ and  $W_0 = K_0 Z_0$. For $t \geq 0$, define $W(t) = W_0 + t B_u^\top$. Then $(Z_0, W(t)) \in \mathcal{L}_D^{(+)}$ for all $t\geq 0$ because $Z_0 \succ 0$ and 
  \begin{align*}
    &\begin{bmatrix}Z_0 A^\top + AZ_0 - W(t)^\top B_u^\top - B_u W(t) & B_d - Z_0C^\top \\ B_d^\top - C Z_0 & -D-D^\top \end{bmatrix} \\
    &= \begin{bmatrix}Z_0 A^\top + AZ_0 - W_0^\top B_u^\top - B_u W_0 & B_d - Z_0C^\top \\ B_d^\top - C Z_0 & -D-D^\top \end{bmatrix} \\&\quad  - 2 t \begin{bmatrix}B_u B_u^\top & 0 \\ 0 & 0\end{bmatrix} \preceq_s 0.
   \end{align*} 
   The corresponding gain $K(t) = W(t) Z_0^{-1} = K_0 + t B_u^\top P_0\in \mathcal{P}^{(+)}$ satisfies $\Vert K(t) \Vert_F \to \infty$ as $t \to \infty$ because $B_u^\top P_0 \neq 0$ since $B_u\neq 0$ and $P_0\succ 0$. Thus, $\mathcal{P}^{+}$ is unbounded. The proof for the case $D=0$ follows analogously. 
\end{proof}

\begin{lem}
The set $\mathcal{P}^{(+)}$ is path-connected.
\end{lem}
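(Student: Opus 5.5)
The idea is to exploit the convex reparameterization of Proposition~\ref{prop:feas}. The set $\mathcal{L}^{(+)}_D$ (resp.\ $\mathcal{L}^{(+)}_0$) is convex in $(Z,W)$, since it is the intersection of the open cone $\{Z\succ 0\}$ with the solution set of an LMI. In the case $D=0$ we must also include the affine equality $B_d^\top = CZ$, which is convex as well. Moreover, $\mathcal{P}^{(+)}$ is exactly the image of this convex set under the map
\[
\Phi:(Z,W)\mapsto WZ^{-1}.
\]
By Proposition~\ref{prop:feas}, every $K\in\mathcal{P}^{(+)}$ arises as $\Phi(Z,W)$ with $Z=P^{-1}$ and $W=KZ$. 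Conversely, every $(Z,W)$ in the set yields a gain $K=WZ^{-1}$ that lies in $\mathcal{P}^{(+)}$, with certificate $P=Z^{-1}$. So the plan is to show that $\mathcal{P}^{(+)}$ is the continuous image of a convex, hence path-connected, set.

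First, fix two gains $K_0,K_1\in\mathcal{P}^{(+)}$ with certificates $P_0,P_1\succ 0$. Set $(Z_i,W_i)=(P_i^{-1},K_iP_i^{-1})$ for $i=0,1$; both pairs lie in $\mathcal{L}^{(+)}_D$ (or $\mathcal{L}^{(+)}_0$). Next, define the segment
\[
(Z(s),W(s))=(1-s)(Z_0,W_0)+s(Z_1,W_1),\qquad s\in[0,1].
\]
We check that it stays in the set:
\begin{itemize}
\item $Z(s)\succ 0$, because it is a convex combination of positive definite matrices.
\item $N_D$ and $N_0$ are affine in $(Z,W)$, so $N(Z(s),W(s))=(1-s)N(Z_0,W_0)+sN(Z_1,W_1)\preceq_s 0$. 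This holds in both the strict ($\prec$) and nonstrict ($\preceq$) cases, since both cones are convex.
\item In the case $D=0$, the equality $CZ(s)=(1-s)B_d^\top+sB_d^\top=B_d^\top$ is preserved.
\end{itemize}

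Then define the path $K(s)=W(s)Z(s)^{-1}$. It is continuous on $[0,1]$ because matrix inversion is continuous, indeed smooth, on the positive definite cone. It satisfies $K(0)=K_0$ and $K(1)=K_1$. By Proposition~\ref{prop:feas} with certificate $P(s)=Z(s)^{-1}\succ 0$, we get $K(s)\in\mathcal{P}^{(+)}$ for all $s$. This gives the desired path.

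The only delicate point is to confirm that Proposition~\ref{prop:feas} gives an exact equivalence, so that membership of $(Z,W)$ in $\mathcal{L}^{(+)}$ really certifies $WZ^{-1}\in\mathcal{P}^{(+)}$. Here the controllability and observability in Assumption~\ref{ass:c+o} are what allow Lemma~\ref{lem:KYP} to apply at every intermediate gain $K(s)$. We also note that the construction does not require $K(s)\in\mathcal{K}$. Path-connectedness is claimed for $\mathcal{P}^{(+)}$ alone, so the possible boundary gains in the nonstrict case (Lemma~\ref{lemma:relation}) cause no difficulty. I expect no further obstacles: the nonconvexity of $\mathcal{P}^{(+)}$ enters only through the nonlinear map $\Phi$, which is continuous.
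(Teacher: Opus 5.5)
Your proposal is correct and follows the paper's own argument: $\mathcal{P}^{(+)}$ is the image of the convex set $\mathcal{L}^{(+)}_D$ (resp.\ $\mathcal{L}^{(+)}_0$) under the continuous map $(Z,W)\mapsto WZ^{-1}$. You simply write out the path explicitly as the image of the segment between two lifted points. One small note: $\mathcal{P}^{(+)}$ is defined directly by the existence of $P\succ 0$ satisfying $L(D,K)$, so membership of $K(s)$ follows from the congruence with $P(s)=Z(s)^{-1}$ alone, and the appeal to controllability/observability and Lemma~\ref{lem:KYP} is not needed.
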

\begin{proof}
   Due to Assumption~\ref{ass:3}, $\mathcal{P}^{(+)}\neq \emptyset$ and consequently $\mathcal{L}_{D}^{(+)}\neq  \emptyset$ and $ \mathcal{L}_{0}^{(+)} \neq  \emptyset$. Both sets are convex since each is defined by an affine LMI, and $\mathcal{L}_0^{(+)}$ additionally includes an affine matrix equality. For  $D\neq 0$, define the map  $\phi_D: \mathcal{L}_D^{(+)} \to \mathcal{P}^{(+)}, \phi_D(Z,W) = W Z^{-1}=K$ which is continuous in $(Z,W)$ since $Z\succ 0$.
   Convexity of $\mathcal{L}_D^{(+)}$ implies it is path-connected, and continuity of $\phi_D$ preserves path-connectedness, so $\mathcal{P}^{(+)}$ is path-connected.
   The proof for the case $D=0$ follows analogously.
\end{proof}

\begin{lem}
   The set $\mathcal{P}^{(+)}$ is contractible. 
\end{lem}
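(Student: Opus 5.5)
The plan is to lift $\mathcal{P}^{(+)}$ back to the convex set $\mathcal{L}^{(+)}_D$ (or $\mathcal{L}^{(+)}_0$), contract there by straight lines, and push the contraction forward with $\phi_D(Z,W)=WZ^{-1}$, as in the proof of path-connectedness. Path-connectedness alone does not suffice: we need a \emph{continuous} lift $K\mapsto (Z(K),W(K))\in\mathcal{L}^{(+)}_D$ with $\phi_D(Z(K),W(K))=K$, i.e. a continuous section of $\phi_D$ over $\mathcal{P}^{(+)}$.

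Suppose such a section exists. Fix $K_0\in\mathcal{P}^{(+)}$ with lift $(Z_0,W_0)$, and define
\begin{align*}
H(K,s)=\phi_D\big((1-s)(Z(K),W(K))+s(Z_0,W_0)\big),\qquad s\in[0,1].
\end{align*}
By convexity of $\mathcal{L}^{(+)}_D$ the argument stays in $\mathcal{L}^{(+)}_D$, so its first block is $\succ 0$ and $H(K,s)\in\mathcal{P}^{(+)}$. The map $H$ is jointly continuous as a composition of continuous maps, and it satisfies $H(K,0)=K$ and $H(K,1)=K_0$. Hence the identity of $\mathcal{P}^{(+)}$ is homotopic to a constant map, which is contractibility. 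The case $D=0$ is identical with $\mathcal{L}^{(+)}_0$, since the extra affine equality $B_d^\top=CZ$ is preserved under convex combinations.

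To build the section, I would use the fibres. For fixed $K$, setting $W=KZ$ makes \eqref{math:KYP_K} (resp.\ \eqref{math:KYP_K_}) an LMI in $Z$ alone, because $A_KZ+ZA_K^\top$ is linear in $Z$. So the fibre $F(K)=\{Z\succ0 \mid (Z,KZ)\in\mathcal{L}^{(+)}\}$ is a nonempty convex subset of a finite-dimensional space. I would then invoke Michael's selection theorem in its finite-dimensional form: a lower hemicontinuous map with nonempty convex values from a metric space into $\mathbb{R}^k$ admits a continuous selection. In the strict case there is a more explicit alternative, namely to take $Z(K)$ as the unique minimiser of the strictly convex barrier $-\log\det(-N(Z,KZ))-\log\det Z+\operatorname{tr}(Z^2)$ over $F(K)$, and to get continuity in $K$ from the implicit function theorem applied to the optimality condition.

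The main obstacle is lower hemicontinuity of $F$, or equivalently continuity of the selection. In the strict case it follows from openness. If $Z\in F(K)$, then $N_D(Z,KZ)\prec0$ and $Z\succ0$, and both inequalities persist for $K'$ near $K$ with the same $Z$. For $D=0$ the equality $B_d^\top=CZ$ does not involve $K$, so the same $Z$ still works. Thus $F$ even has open graph in $K$, and Michael's theorem applies.

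The nonstrict case is harder, because the semidefinite inequality can lose feasibility under perturbation of $K$. There I would first try to show that for $K\in\mathcal{P}$ any $Z\in F(K)$ can be moved into $F(K')$ by a small correction. Failing that, I would try to restrict to the relative interior of the fibres and argue that the face structure of the LMI is locally constant on $\mathcal{P}$. This is the step where I expect the argument to need the most care.
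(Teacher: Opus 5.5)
Your architecture matches the paper's: a continuous section of $\phi_D$, the straight-line contraction inside the convex set $\mathcal{L}^{(+)}_D$ (resp.\ $\mathcal{L}^{(+)}_0$), pushed forward by $\phi_D$. For $\mathcal{P}^+$ your argument is complete. The fibres $F(K)$ are nonempty, convex and relatively open, and $\{K \mid Z\in F(K)\}$ is open for each fixed $Z$, so a continuous selection exists (by Michael's theorem, or directly by a partition of unity). Your barrier minimizer combined with the implicit function theorem also works. Here you are more careful than the paper: its $J(Z)=\Vert Z\Vert_F^2-\log\det Z$ need not attain its infimum on the open strict fibre, because the minimizer over the closure can lie where $N_D(Z,KZ)$ is singular.

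The gap is the nonstrict set $\mathcal{P}$, and closing it needs more than extra care. A continuous section over $\mathcal{P}$ need not exist at all, so in general you can establish neither lower hemicontinuity (your ``small correction'') nor local constancy of the face structure. Take the paper's own example $A=B_u=I_2$, $B_d=C^\top=\begin{bmatrix}1&0\end{bmatrix}^\top$, $D=0$, where $P=\operatorname{diag}(1,p)$ and \eqref{math:nonconvex} applies. For $\delta>0$, the gain $K_0=\operatorname{diag}(1,1+\delta)$ lies in $\mathcal{P}$ with \emph{every} $p>0$. For $t\neq 0$, the gains
\[
K^{(1)}_t=\begin{bmatrix}1 & t\\ -t & 1+\delta\end{bmatrix},\qquad K^{(2)}_t=\begin{bmatrix}1 & 2t\\ -t & 1+\delta\end{bmatrix}
\]
lie in $\mathcal{P}$ only with $p=1$ and $p=2$, respectively, because the vanishing $(1,1)$ entry of \eqref{math:nonconvex} forces $K_2+pK_3=0$. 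Both families tend to $K_0$ as $t\to 0$, so a continuous section would need $p=1$ and $p=2$ at $K_0$ simultaneously.

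In particular, the paper's map $K\mapsto Z(K)$ jumps at $K_0$, where it picks $p=\sqrt{2}$. Its claim that $F(K)$ ``varies continuously with $K$'' is exactly the step that fails, so the paper does not supply the missing ingredient either. In this example $(A_{K_0},B_d)$ is uncontrollable, so it sits at the edge of Assumption~\ref{ass:c+o}. Still, $K_0\in\mathcal{P}$ by \eqref{math:P_}, and your argument never uses minimality.

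The statement itself survives here. Since $A_{2I_2}=-I_2$, the segment from any $K\in\mathcal{P}$ to $2I_2$ stays in $\mathcal{P}$ with the same $P$, so $\mathcal{P}$ is star-shaped. For general $\mathcal{P}$, however, you need an argument that avoids a global section, or one that genuinely uses minimality to recover lower hemicontinuity. As written, your proposal proves contractibility of $\mathcal{P}^+$ only.
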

\begin{proof}
Consider the case $D\neq 0$ and the map $\phi_D: \mathcal{L}_D^{(+)} \to \mathcal{P}^{(+)}$ given by $\phi_D(Z,W) = W Z^{-1}=K$. By definition, the map $\phi_D$ is surjective. We construct a continuous section $s: \mathcal{P}^{(+)} \to \mathcal{S}$ such that $\phi(s(K))=K$ for all $K\in\mathcal{P}^{(+)}$.  For each $K \in \mathcal{P}^{(+)}$, the fiber $\phi^{-1}(K)$ is convex and nonempty. 
Let $F(K) = \{Z\succ 0, N_D(Z,KZ)\preceq_s 0\}$, $J(Z) = \Vert Z\Vert_F^2 - \log\det (Z)$  and $Z(K) = \arg\min_{Z\in F(K)}  J(Z)$. 
The function $J(Z)$ is strictly convex and coercive. For $K\in\mathcal{P}^{(+)}$, coercivity guarantees the existence of the minimizer  $Z(K)$ despite $F(K)$ being unbounded, while strict convexity ensures the uniqueness of $Z(K)$.
 The continuity of $K\mapsto Z(K)$ follows from parametric convex programming since $F(K)$ varies continuously with $K$ and $J$ is continuous. Thus, $s(K)=(Z(K),KZ(K))$ is a continuous section. Define the homotopy $H:\mathcal{P}^{(+)}\times [0,1]\to\mathcal{P}^{(+)},~H(K,t) = \phi((1-t)s(K) + ts(K_0))$ for a fixed $K_0 \in \mathcal{P}$. The convexity of $\mathcal{L}_D^{(+)}$ ensures $H(K,t) \in \mathcal{P}$ for all $t \in [0,1]$, with $H(K,0) = K$ and $H(K,1) = K_0$. Thus $H$ continuously contracts $\mathcal{P}^{(+)}$ to $K_0$, proving contractibility. The case $D=0$ can be proved in the same manner by incorporating the additional linear constraint $B_d^\top = C Z$ into $F(K)$.
\end{proof}

\begin{lem}
There exist matrices $(A,B_u,B_d,C,D)$ defining the system \eqref{math:lin_sys_} for which the set $\mathcal{P}^{(+)}$ is nonconvex.
\end{lem}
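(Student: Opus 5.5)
The plan is to exhibit an explicit counterexample and to reduce passivity to stability, so that nonconvexity of $\mathcal{P}^{(+)}$ follows from the well-known nonconvexity of the stability region $\mathcal{K}$ together with Lemma~\ref{lemma:relation}. I would take $n=m=p=2$, $A=0$, $B_u=I_2$, $B_d=I_2$, $C=I_2$, $D=\delta I_2$ with $\delta>0$ to be chosen, and $Q=I_2$, $R=I_2$. Then $A_K=-K$. Assumption~\ref{ass:c+o} holds trivially: $(A,B_u)$ is controllable, $(A,\sqrt{Q})$ is observable, and $(A_K,B_d)=(A_K,I_2)$ and $(A_K,C)=(A_K,I_2)$ are controllable and observable for every $K$.

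\textbf{Two gains with an unstable midpoint.} Choose
\begin{align*}
K_1=\begin{bmatrix}3&4\\-1&-1\end{bmatrix},\qquad K_2=\begin{bmatrix}-1&-1\\4&3\end{bmatrix}.
\end{align*}
Both have trace $2$ and determinant $1$, so the eigenvalues of $K_i$ have positive real parts and $A_{K_i}=-K_i$ is Hurwitz. The midpoint $\bar K=\tfrac12(K_1+K_2)$ has diagonal entries $1$ and off-diagonal entries $\tfrac32$, so its eigenvalues are $1\pm\tfrac32$. Hence $A_{\bar K}$ has the eigenvalue $\tfrac12>0$, which gives $\bar K\notin\operatorname{cl}(\mathcal{K})$. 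By Lemma~\ref{lemma:relation}, $\bar K\notin\mathcal{P}\supseteq\mathcal{P}^+$.

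\textbf{Showing $K_1,K_2\in\mathcal{P}^+\subseteq\mathcal{P}$.} Since $A_{K_i}$ is Hurwitz, there is $P_i\succ0$ with $A_{K_i}^\top P_i+P_iA_{K_i}=-I_2$. With $D=\delta I_2\neq0$, the condition $M_{K_i}\prec0$ is, by a Schur complement with respect to the $(2,2)$ block $-2\delta I_2\prec0$, equivalent to
\begin{align*}
-I_2+\tfrac{1}{2\delta}(P_i-I_2)(P_i-I_2)^\top\prec0.
\end{align*}
This holds once $\delta>\tfrac12\max_i\Vert P_i-I_2\Vert_2^2$. Fixing such a $\delta$, both $K_1$ and $K_2$ lie in $\mathcal{P}^+$, and also in $\mathcal{K}$, so Assumption~\ref{ass:3} is satisfied. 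Their convex combination $\bar K$ does not belong to $\mathcal{P}$ or $\mathcal{P}^+$, which proves nonconvexity in both the strict and nonstrict cases.

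The only step requiring care is choosing the pair $K_1,K_2$ so that both are stable but the midpoint is not. The trace/determinant test above settles this quickly. The remaining work is the routine verification of the Schur complement bound, and checking that the data satisfy the standing assumptions, which is immediate because $B_d=C^\top=I_2$.
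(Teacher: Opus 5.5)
Your proof is correct, and it takes a different route from the paper.

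\textbf{The paper's route.} It works with $D=0$, taking $A=B_u=I_2$ and $B_d=C^\top=[1,\,0]^\top$. The constraint $B_d^\top P=C$ forces $P=\operatorname{diag}(1,p)$. Sylvester's criterion then shows that the midpoint $K_c$ of two passivating gains violates the determinant condition for every $p>0$. In other words, it certifies infeasibility of the LMI at $K_c$ by direct computation.

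\textbf{Your route.} You use $D=\delta I_2$. Membership of two stabilizing gains in $\mathcal{P}^+$ comes from a Schur complement that holds once the feedthrough is large. The midpoint is excluded purely by instability through Lemma~\ref{lemma:relation}, which rules out every $P\succ0$ at once.

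\textbf{What your route buys.}
First, you treat $\mathcal{P}$ and $\mathcal{P}^+$ simultaneously. The paper's gains are only nonstrictly passivating: for $\tilde K_1$ the determinant condition reads $-\tfrac14(p-4)^2\ge 0$, so $\tilde K_1\notin\mathcal{P}^+$. Its example as written therefore covers only $\mathcal{P}$.
Second, your data satisfy Assumptions~\ref{ass:c+o} and~\ref{ass:3} for every $K$, since $B_d=C=I_2$. In the paper's example, $(A_K,B_d)$ is uncontrollable, e.g., for diagonal $K$.
Third, your argument exposes a general mechanism. For sufficiently large $D+D^\top$, the set $\mathcal{P}^+$ contains any prescribed finite set of stabilizing gains. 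Hence any two stabilizing gains whose connecting segment leaves $\operatorname{cl}(\mathcal{K})$ witness nonconvexity of both sets.

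\textbf{The price.} Your method only detects nonconvexity inherited from the stability region. A direct LMI computation like the paper's could in principle exhibit a stabilizing but nonpassivating midpoint. In the paper's actual example, however, $A_{K_c}=I_2-K_c$ has eigenvalues $-1\pm 1.25$. That midpoint is unstable too, so your shortcut would have sufficed there as well.
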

\begin{proof}
    The set $\mathcal{P}^{(+)}$ is the image of the convex sets $\mathcal{L}_D^{(+)}$ or $\mathcal{L}_0^{(+)}$ under the nonlinear map $(Z,W) \mapsto WZ^{-1}$ for $D\neq 0$ and $D=0$, respectively. Since this map is not affine, its image is not guaranteed to be convex. For a concrete example, consider the set $\mathcal{P}$ \eqref{math:P_} of the system 
\begin{align}
A = I_2,~
B_u = I_2,~
B_d =\begin{bmatrix}1 \\ 0\end{bmatrix},~
C= \begin{bmatrix}1 & 0\end{bmatrix},~
D =0.
\end{align}
The constraints $B_d^\top P = C$ and $P\succ 0$ force $P = \begin{bmatrix}1 & 0 \\ 0 & p\end{bmatrix}$ with $p \!> \!0$, and  $A_K^\top P\! + \!P A_K\!\preceq \!0$ with $K \!=\! \begin{bmatrix} K_1 & K_2 \\ K_3 & K_4\end{bmatrix}$ yields 
\begin{align}
   \label{math:nonconvex} 
\begin{bmatrix}
2(1- K_1) & -K_2-pK_3 \\
-K_2-pK_3 & -2p(K_4-1)
\end{bmatrix} \preceq 0.
\end{align}
By Sylvester's criterion, \eqref{math:nonconvex} is equal to  $K_1 \geq 1$, $K_4 \geq 1$, and $4p(1-K_1)(1-K_4)-(K_2+p K_3)^2 \geq 0$ for some $p > 0$. The gains $\tilde K_1 = \begin{bmatrix}2 & 2 \\ 0.5 & 2\end{bmatrix}$ and $\tilde K_2 = \begin{bmatrix}2 & 0.5 \\ 2 & 2\end{bmatrix}$ are passivating with $p = 4$ and $p = \tfrac{1}{4}$, respectively. However, their convex combination $K_c = \frac{1}{2}\tilde K_1 + \frac{1}{2} \tilde K_2 = \begin{bmatrix}2 & 1.25 \\ 1.25 & 2\end{bmatrix}$ requires $\frac{7p}{8}-\frac{25p^2}{16}-\frac{25}{16} = - \frac{25}{16} \bigl( p- \frac{7}{25} \bigr)^2 - \frac{36}{25}\geq 0$ which fails to hold for any $p>0$. Thus, $K_c$ is not passivating, proving the nonconvexity of the set $\mathcal{P}$.  
\end{proof}
An example of a convex set $\mathcal{P}$ is given in Section~\ref{sec:simu}.

\section{Indirect Approach}
To address the intractability of problems \eqref{math:nonconv_1} and \eqref{math:nonconv_2}, we propose an indirect method for computing feasible solutions.
Specifically, we present a three-step procedure that computes a feasible feedback gain for Objective~\ref{obj} and yields an upper bound on the optimal value of~\eqref{math:opt_1}. The approach is illustrated in Fig.~\ref{fig:4step}.  
In the first step, the passivity region is approximated as a union of convex and compact polytopes.  In the second step, this union is approximated from the inside by a single convex polytope. 
Finally, in the third step, the state-feedback gain within this convex polytope that minimizes the LQR cost is computed.

\begin{figure}[t]
\hspace{-0.3cm}
\begin{tikzpicture}[
  rectbox/.style={
    draw,
    rounded corners,
    minimum width=2.4cm,
    minimum height=1.8cm, 
     fill=lightsteelblue2,  
    fill opacity=0.3,   
  }, font=\scriptsize,arrow style/.style={-{Latex}}
]

\node[rectbox,anchor=west] (r1) at (0,0) {};
\node[rectbox,right=6mm of r1] (r2)  {};
\node[rectbox,right=6mm of r2] (r3)  {};

\node at (r1.north) [below=1pt] {Step 1: Sec.~\ref{sec:4.1} };
\node at (r2.north) [below=1pt] {Step 2: Sec.~\ref{subsec:4_2}};
\node at (r3.north) [below=1pt] {Step 3: Sec.~\ref{subsec:con}};

\foreach \r in {r1,r2,r3} {
  \draw ($( \r.north west)!0.25!(\r.south west)$) --
        ($( \r.north east)!0.25!(\r.south east)$);
}

\node[align=center, text width=3cm, yshift=-6pt]  at (r1.center) {Characterizing the  passivity region: \par  Lemma~\ref{lem:vertices} \par 
$\rightarrow \mathcal{P}_U\subset \mathcal{P}^{(+)}$};
\node[align=center, text width=3cm, yshift=-6pt]   at (r2.center) {Convex inner approximation: Lemma~\ref{lem:under} \par $\rightarrow \mathcal{P}_C\subseteq \mathcal{P}_U$};
\node[align=center, text width=3cm, yshift=-6pt] at (r3.center) {Constraint Optimization: Theorem~\ref{theo:1} \par $\rightarrow \hat{K}\in\mathcal{P}_C$};
\draw[arrow style, thick] (r1.east) -- (r2.west);
\draw[arrow style, thick] (r2.east) -- (r3.west);
\end{tikzpicture}
\vspace{-12pt}
\caption{Overview of the  three steps of the indirect approach}
\label{fig:4step}
\end{figure}
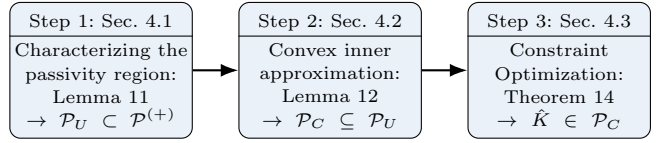

\begin{remark}
   \label{remark:2} 
   Before proceeding, it is worth verifying whether the optimal LQR feedback $K^*$ already renders the system \eqref{math:lin_sys_} (strictly) passive. This can be checked using Lemma~\ref{lem:KYP}.  If this condition holds, then Objective~\ref{obj} is already satisfied. Otherwise, the following method is proposed
\end{remark}

\subsection{Characterizing the Passivity Region}
\label{sec:4.1} 
In this subsection, we  state a sufficient condition such that all gains within a convex polytope render the closed-loop system (strictly) passive.

\begin{lem}
   \label{lem:vertices} 
   Let $\mathcal{C}\subset \mathbb{R}^{mn}$ be a convex compact polytope. 
   If there exists a matrix $P\succ 0$ such that  $    L(D,K)$ holds for all $K \in \operatorname{vert}(\mathcal{C})$, then every $K\in\mathcal{C}$ (strictly) passivates the system \eqref{math:lin_sys_} with the common storage function $S(x) = \frac{1}{2}x^\top P x$. Consequently $\mathcal{C}\subseteq \mathcal{P}^{(+)}$.
\end{lem}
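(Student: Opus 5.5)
The plan is to show that, with $P$ held fixed, the condition $L(D,K)$ is an affine (hence convex) condition in $K$, so that it propagates from the vertices of $\mathcal{C}$ to all of $\mathcal{C}$. Since $\mathcal{C}$ is a compact convex polytope, every $K\in\mathcal{C}$ can be written as a convex combination $K=\sum_{i=1}^{N}\lambda_i K_i$, where $K_i\in\operatorname{vert}(\mathcal{C})$, $\lambda_i\ge 0$ and $\sum_i\lambda_i=1$. This is the Minkowski/Krein--Milman representation of a polytope by its finitely many vertices, with $K$ identified with $\operatorname{vec}(K)\in\mathbb{R}^{mn}$.

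For the case $D\neq 0$, I would write $M_K$ explicitly with $A$ replaced by $A_K=A-B_uK$ and $B$ replaced by $B_d$:
\begin{align*}
M_K=\begin{bmatrix} A^\top P+PA-K^\top B_u^\top P-PB_uK & PB_d-C^\top\\ B_d^\top P-C & -D-D^\top\end{bmatrix}.
\end{align*}
For fixed $P$, the map $K\mapsto M_K$ is affine. Because $\sum_i\lambda_i=1$, the $K$-independent blocks reproduce themselves, and so $M_K=\sum_i\lambda_i M_{K_i}$. A convex combination of negative semidefinite matrices is negative semidefinite. For the strict version, a convex combination of negative definite matrices is negative definite, since at least one $\lambda_i>0$ and each term satisfies $x^\top M_{K_i}x<0$ for $x\neq 0$. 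Hence $M_K\preceq_s 0$ for every $K\in\mathcal{C}$.

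For the case $D=0$, the equality $B_d^\top P=C$ does not depend on $K$, so it holds automatically with the same $P$. The inequality $A_K^\top P+PA_K\preceq_s 0$ is again affine in $K$, so the same convex-combination argument applies. In both cases $L(D,K)$ holds for all $K\in\mathcal{C}$ with one common $P\succ 0$.

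Finally I would invoke Lemma~\ref{lem:KYP}, or equivalently Proposition~\ref{prop:feas}, for each closed-loop system \eqref{math:closed_lin}. By Assumption~\ref{ass:c+o} each such system is controllable and observable, so the lemma applies and the system is (strictly) passive with storage function $S(x)=\tfrac12 x^\top P x$. By the definitions \eqref{math:P_} and \eqref{math:P_+}, this gives $\mathcal{C}\subseteq\mathcal{P}^{(+)}$.

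I expect the only delicate point to be the applicability of Lemma~\ref{lem:KYP} at the interior points of $\mathcal{C}$, since it requires controllability and observability for each $K$. I would handle this by pointing to Assumption~\ref{ass:c+o}, which is stated for all admissible gains. Alternatively, I would note that the sufficiency direction of the KYP argument only needs the dissipation inequality: $\dot S\le d^\top y$ (minus a positive definite term in the strict case) follows directly from $L(D,K)$ by computing $\dot S$ along trajectories and applying the matrix inequality to the vector $(x,d)$.
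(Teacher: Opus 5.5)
Your proposal is correct and follows essentially the same route as the paper: write $K$ as a convex combination of the vertices, use that $M_K$ (or $A_K^\top P+PA_K$ when $D=0$) is affine in $K$ for fixed $P$, and conclude $M_K=\sum_i\lambda_i M_{K^i}\preceq_s 0$. Your additional points are valid refinements of details the paper leaves implicit: the strict-case argument, the $K$-independence of $B_d^\top P=C$, and the direct dissipation-inequality check, which avoids needing the controllability/observability hypothesis of Lemma~\ref{lem:KYP}.
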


\begin{proof} 
   Let $\operatorname{vert }(\mathcal{C}) = \{K^{0}, \dots, K^{r} \}$. 
   Every $K\in \mathcal{C}$ is a convex combination $K=\sum_{i=1}^{r}\lambda_i K^{i}$ with $\lambda_i\ge 0$ and $\sum_i^r\lambda_i=1$. Consider first the case $D\neq 0$. Since $M_K$ is affine in $K$, we have $M_K = \sum_i^r \lambda_i M_{K^i} \preceq_s 0$. Hence, $M_K\preceq_s 0$ for all $K\in\mathcal{C}$, showing that $S(x)=\tfrac{1}{2}x^\top P x$ is a common storage function, and $\mathcal{C}\subseteq \mathcal{P}^{(+)}$. 
   The same reasoning applies for $D = 0$.
\end{proof}

Lemma~\ref{lem:vertices} establishes that the existence of a common storage function for all gains $K \in \mathcal{C}$
can be verified through a finite number of LMIs, evaluated only at the vertices of $\mathcal{C}$.
This condition can be efficiently checked using standard robust optimization techniques for polytopic uncertainty, e.g., see \cite{lofberg2012}. In practice, the considered polytopes must be sufficiently small to guarantee that, if $\mathcal{C}\subset\mathcal{P}^{(+)} $, a common $P\succ 0$ satisfying 
Lemma~\ref{lem:vertices} exists. 

The key idea of the indirect approach is to construct an inner approximation of the passivity region $\mathcal{P}^{(+)}$ using a union of convex, compact, and connected polytopes $\mathcal{C}_i$ for which Lemma~\ref{lem:vertices} holds, i.e.,  $\mathcal{P}_{U} = \bigcup_i \mathcal{C}_i\subseteq \mathcal{P}^{(+)}$.
For simplicity and ease of alignment, hypercubes are employed in the following,
although the method naturally extends to other convex, compact polytopes such as simplices,
cross-polytopes, or parallelepipeds, which can be translated and  rotated to tile $\mathcal{P}^{(+)}$ efficiently.

\begin{figure}[tb]
    \centering
\includegraphics[width=0.97\columnwidth]{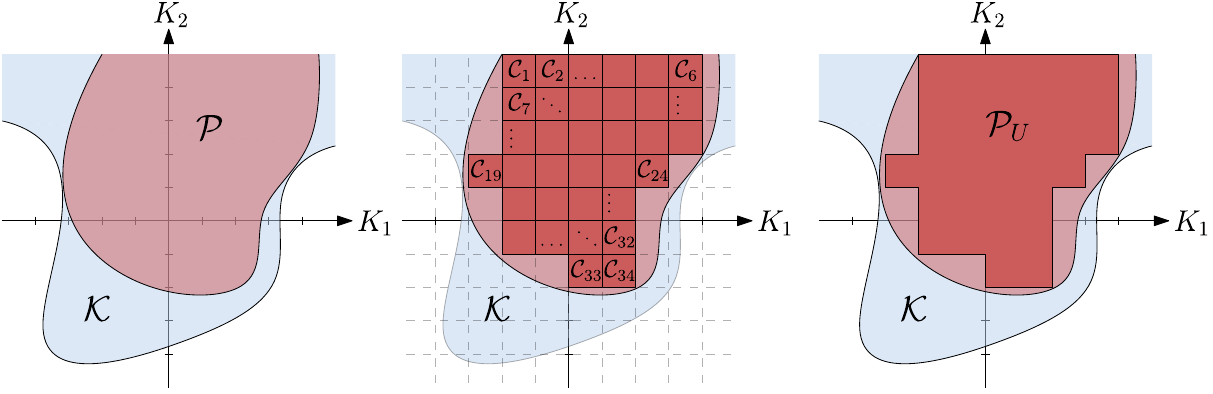}
 \caption{Illustration of the stability region $\mathcal{K}$,
the passivity region $\mathcal{P}$, the verified cubes $\mathcal{C}_i$,
and their union $\mathcal{P}_U$.}
 \label{fig:sketch}
\end{figure}

Fig.~\ref{fig:sketch} depicts the idea  for the special case of a 2-dimensional feedback matrix.
In this example,  Lemma~\ref{lem:vertices} is evaluated for 100 cubes, of which passivity is verified for 34.  
The union $\mathcal{P}_U$ of these cubes forms a compact inner approximation of the unbounded passivity region.
The use of multiple convex polytopes is motivated by the nonconvexity of $\mathcal{P}^{(+)}$, which prevents any single convex set from capturing it. Moreover, path-connectedness ensures that expanding the search via hypercubes adjacent to previously verified ones preserves the connectivity of $\mathcal{P}_U$.


\begin{remark}
Several practical considerations arise when implementing the cube-based search.
First, the search limits should ideally be near the optimal LQR gain.
To accelerate the process, one can explore hypercubes adjacent to those already verified, start with larger hypercubes, and refine the search with smaller ones in regions where feasibility fails.
A more accurate inner approximation can be obtained by using smaller cubes
 near the boundary of $\mathcal{P}^{(+)}$. 
\end{remark}

\subsection{Convex Inner Approximation of the Passivity Region}
\label{subsec:4_2} 
Although the union $\mathcal{P}_U$ provides a valid inner approximation of the passivity region $\mathcal{P}^{(+)}$, it may be nonconvex. 
This nonconvexity prevents the direct use of $\mathcal{P}_U$ in optimization when solving for the optimal LQR gain within the verified region.
Therefore, we seek a convex inner approximation of $\mathcal{P}_U$ by a single polytope.

\begin{lem}
   \label{lem:under} 
   Let  $\mathcal{P}_{U} = \bigcup_i \mathcal{C}_i\neq\emptyset$ be a finite union of convex, compact and connected polytopes $\mathcal{C}_i\subseteq \mathcal{P}^{(+)}$.
   Then there exists a convex, compact polytope
$\mathcal{P}_C$ such that $\mathcal{P}_C\subseteq \mathcal{P}_U\subset \mathcal{P}^{(+)}$. 
\end{lem}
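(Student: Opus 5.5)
The statement is essentially an existence claim, so the plan is to give an explicit choice of $\mathcal{P}_C$ and then, to make the lemma useful for Step~3, indicate how a larger one can be computed. Since $\mathcal{P}_U\neq\emptyset$, fix an index $i_0$ with $\mathcal{C}_{i_0}\neq\emptyset$. I would set $\mathcal{P}_C\coloneq\mathcal{C}_{i_0}$. By hypothesis it is a convex, compact polytope. It satisfies $\mathcal{C}_{i_0}\subseteq\bigcup_i\mathcal{C}_i=\mathcal{P}_U$. Each $\mathcal{C}_i\subseteq\mathcal{P}^{(+)}$, either by assumption or by Lemma~\ref{lem:vertices}, which gives $\mathcal{P}_U\subseteq\mathcal{P}^{(+)}$.

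The inclusion $\mathcal{P}_U\subset\mathcal{P}^{(+)}$ is written as strict in the statement. I would derive strictness from the preceding lemma, which shows that $\mathcal{P}^{(+)}$ is unbounded. $\mathcal{P}_U$, being a finite union of compact sets, is compact and hence bounded. Therefore $\mathcal{P}_U\neq\mathcal{P}^{(+)}$. This completes the existence proof.

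Because the trivial choice may be very conservative, I would then describe a constructive refinement. Take a polytope $\mathcal{P}_C=\{K:\ H\operatorname{vec}(K)\le h\}$, for example an axis-aligned box $\{K:\ \ell\le\operatorname{vec}(K)\le u\}$ when the $\mathcal{C}_i$ are hypercubes on a grid. Require $\mathcal{P}_C\subseteq\mathcal{P}_U$, and maximize its volume or a proxy such as $\sum_j\log(u_j-\ell_j)$.

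For boxes aligned with the grid, the containment condition is combinatorial: every grid cell meeting the box must be a verified cube. This reduces to a largest-empty-box search over finitely many candidate boxes. Such a maximizer exists because the set of candidates is finite and nonempty, since it contains each $\mathcal{C}_i$ itself. An alternative is a largest inscribed ellipsoid or box inside the convex hull of a convex sub-collection.

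The main obstacle I expect is certifying $\mathcal{P}_C\subseteq\mathcal{P}_U$ for a general polytope, because $\mathcal{P}_U$ is nonconvex. Containment in a union cannot be checked vertex-wise: even if all vertices lie in $\mathcal{P}_U$, the polytope may not. I would sidestep this by restricting to grid-aligned boxes, where containment is exactly a finite covering check. For the existence statement itself, this difficulty does not arise, since $\mathcal{P}_C=\mathcal{C}_{i_0}$ already suffices.
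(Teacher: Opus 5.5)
Your proof is correct, and it takes a different route from the paper's.

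\textbf{The paper's route.} The paper picks $K_0\in\operatorname{int}(\mathcal{P}_U)$ and a ball $B(K_0,\varepsilon)\subseteq\mathcal{P}_U$, and inscribes a small hypercube centred at $K_0$ in that ball. This silently assumes $\operatorname{int}(\mathcal{P}_U)\neq\emptyset$. That does not follow from the hypotheses, since the $\mathcal{C}_i$ may be lower-dimensional, e.g.\ single points.

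\textbf{What your route buys.} Your choice $\mathcal{P}_C=\mathcal{C}_{i_0}$ needs only $\mathcal{P}_U\neq\emptyset$, so it proves the lemma exactly as stated. You also justify the strict inclusion $\mathcal{P}_U\subsetneq\mathcal{P}^{(+)}$, by comparing compactness of $\mathcal{P}_U$ with the unboundedness lemma. The paper only asserts this ``by construction.''

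\textbf{Nonempty interior.} One might think the paper's construction is better because it yields a full-dimensional $\mathcal{P}_C$, which Step~3 needs for $K(0)\in\operatorname{int}(\mathcal{P}_C)$. But a finite union of lower-dimensional polytopes lies in finitely many hyperplanes and so has empty interior. Hence $\operatorname{int}(\mathcal{P}_U)\neq\emptyset$ holds exactly when some $\mathcal{C}_i$ is full-dimensional. Choosing $i_0$ to be such an index gives your $\mathcal{P}_C$ nonempty interior whenever the paper's argument applies. So your route loses nothing, and it still gives a (degenerate) $\mathcal{P}_C$ when the paper's argument breaks down. Neither argument tries to make $\mathcal{P}_C$ large; the paper defers that to its potato-peeling remark.

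\textbf{Minor point in the optional refinement.} The grid criterion should require verification only of cells whose \emph{interior} meets the box. As written (``every grid cell meeting the box''), an unverified face-adjacent neighbour would disqualify even a single verified cube. That contradicts your claim that each $\mathcal{C}_i$ is itself a candidate. This does not affect the existence proof.
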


\begin{proof}
   Since $\mathcal{P}_U$ has nonempty interior, let $K_0\in \operatorname{int}(\mathcal{P}_U)$
   and $\varepsilon >0$ such that the open ball $B(K_0,\varepsilon)\subseteq \mathcal{P}_U$.
   Inside this ball, inscribe any convex compact polytope $\mathcal{P}_C$, e.g., a hypercube centered at $K_0$ with edge length $\varepsilon/(2\sqrt{mn})$ ensures $\mathcal{P}_C\subseteq B(K_0,\varepsilon)$ because the half-diagonal of the hypercube is $\varepsilon/2$. 
   By construction,  $\mathcal{P}_C\subseteq \mathcal{P}_U \subset  \mathcal{P}^{(+)}$.
\end{proof}

\begin{figure}[t]
    \centering
\includegraphics[width=1\columnwidth]{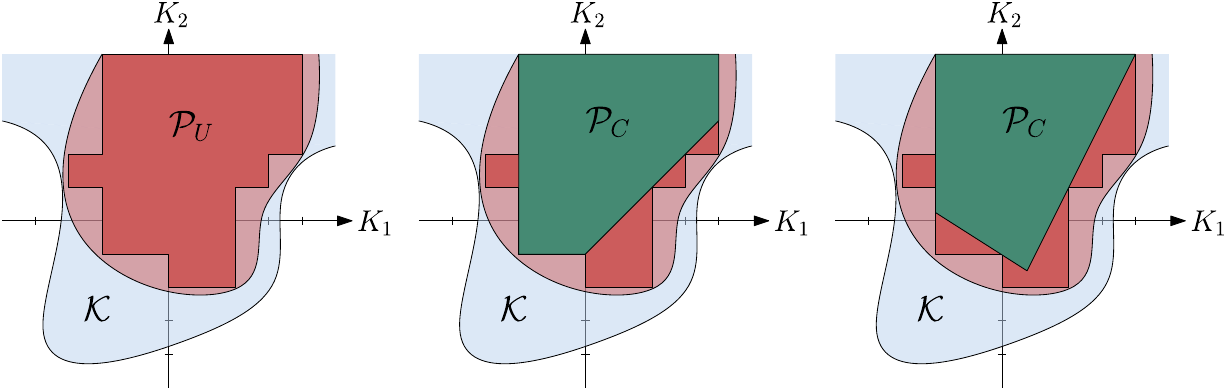}
 \caption{Illustration of the stability region $\mathcal{K}$,
the passivity region $\mathcal{P}$, the union $\mathcal{P}_U$ of the verified cubes $\mathcal{C}_i$ and their convex inner approximation by $\mathcal{P}_C$.}
 \label{fig:sketch2}
\end{figure}

Fig.~\ref{fig:sketch2} illustrates two convex inner approximations of $\mathcal{P}_U$ using a polygon $\mathcal{P}_C$. 
Ideally, the polygon should be selected to maximize its area, thereby minimizing the approximation error.
In Fig.~\ref{fig:sketch2}, the middle polygon is about $8\%$ larger than the right one.

\begin{remark}
   \label{remark:inner} 
Obtaining the largest $\mathcal{P}_C $ such that $\mathcal{P}_C\subseteq \mathcal{P}_U$ is a nontrivial problem.
When $mn=2$, the problem of finding the convex polygon of the largest possible area that lies within a given nonconvex simple polygon is known as the \textit{potato peeling} problem.
In two dimensions, the problem can be efficiently solved by the algorithm from \cite{chang1986polynomial}.
In higher dimensions, to the best of the authors' knowledge,  no efficient algorithms exist. 
\end{remark}

For the subsequent subsection, we introduce the notation 
\begin{align}
 \mathcal{P}_C=\{ K\in\mathbb{R}^{m\times n}\mid g(\operatorname{vec}\left(K \right))\geq 0\}\subset \mathbb{R}^{m\times n},
\end{align} 
where $g:\mathbb{R}^{m n}\to\mathbb{R}^l$ defines $l$ linearly  independent affine inequalities,  $J_g\in\mathbb{R}^{l\times mn}$ denotes its constant Jacobian, and $\mathcal{P}_C$ is a compact, convex polytope.

\subsection{Constraint Optimization}
\label{subsec:con} 

In this subsection, we introduce a projected gradient flow to compute the optimal feedback gain $\hat{K}$ with respect to the LQR objective within the set $\mathcal{P}_C$. 
To this end, we incorporate the projection mechanism from Subsection~\ref{subsec:proj} into the policy gradient flow \eqref{math:grad}. The resulting projected gradient flow is defined as\footnote{We slightly abuse notation in the projection by identifying $\mathcal{P}_C\subset \mathbb{R}^{m\times n}$ with its vectorized counterpart in $\mathbb{R}^{mn}$.}
\begin{align}
 \operatorname{vec}(\dot K )& \!=\! - \alpha \Pi_{\mathcal{P}_C}  \! \operatorname{vec}( \nabla f_K ), ~ K (0) \in \operatorname{int}(\mathcal{P}_C), \label{math:proj_1} 
\end{align} 
where $\alpha>0$ is the learning rate, 
\begin{align}
 \Pi_{\mathcal{P}_C} & = I_{mn} - J_g^\top \bigl(2 \operatorname{diag}\bigl(g(\operatorname{vec}\left(K  \right))\bigr) + J_g J_g^\top \bigr)^{-1} \!J_g, \label{math:projection}  \\
 \nabla f_K & = 2 ( R K - B_u^\top X_K ) Y_K
\end{align} 
and $X_K$ and $Y_K$ are the solutions of the Lyapunov equations \eqref{math:Lyap1} and \eqref{math:Lyap2}, respectively, with $B$ replaced by $B_u$.

\begin{lem} Let $\mathcal{P}_C\subset \mathcal{P}^{(+)}$. Then, the projected gradient flow  \eqref{math:proj_1} is well-defined for all $K\in\mathcal{P}_C$.
\end{lem}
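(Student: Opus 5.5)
The plan is to check separately that the two factors of the right-hand side of \eqref{math:proj_1} are defined at every $K\in\mathcal{P}_C$: the gradient $\nabla f_K$ and the projection matrix $\Pi_{\mathcal{P}_C}$.

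\textbf{The gradient.} First I would establish $\mathcal{P}_C\subseteq\mathcal{K}$. In the strict case this follows at once from Lemma~\ref{lemma:relation}, since $\mathcal{P}_C\subset\mathcal{P}^+\subseteq\mathcal{K}$. In the nonstrict case, Lemma~\ref{lemma:relation} only gives $\mathcal{P}\subseteq\operatorname{cl}(\mathcal{K})$. Here I would use that $\mathcal{P}_C$ is built from gains lying in $\mathcal{K}\cap\mathcal{P}^{(+)}$, i.e.\ it is the feasible set of \eqref{math:opt_1} (cf.\ Remark~\ref{remark:int}). Equivalently, in Lemma~\ref{lem:vertices} one certifies vertices whose closed loops are Hurwitz. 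I would state this explicitly as part of the hypothesis $\mathcal{P}_C\subset\mathcal{P}^{(+)}$ for the nonstrict case.

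Given $K\in\mathcal{K}$, the matrix $A_K$ is Hurwitz. Hence the Lyapunov equations \eqref{math:Lyap1}--\eqref{math:Lyap2} have unique solutions $X_K\succeq0$ and $Y_K\succ0$, given by the convergent integrals \eqref{math:int_P} and \eqref{math:Y}. Therefore $\nabla f_K=2(RK-B_u^\top X_K)Y_K$ is well defined. It is also real analytic in $K$ on $\mathcal{K}$, since $X_K$ and $Y_K$ come from inverting the nonsingular Kronecker operator $I\otimes A_K^\top+A_K^\top\otimes I$.

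\textbf{The projection.} Next I would show that $F(K)=2\operatorname{diag}(g(\operatorname{vec}K))+J_gJ_g^\top$ is invertible on $\mathcal{P}_C$. On $\mathcal{P}_C$ we have $g\ge0$, so $2\operatorname{diag}(g)\succeq0$. Moreover $J_gJ_g^\top\succeq0$, and it is in fact $\succ0$ because the $l$ affine constraints are linearly independent, i.e.\ $J_g$ has full row rank. This gives $F(K)\succ0$, so $F(K)$ is invertible and $\Pi_{\mathcal{P}_C}$ in \eqref{math:projection} is defined and smooth in $K$.

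Here lies the main obstacle. A bounded polytope in $\mathbb{R}^{mn}$ needs at least $mn+1$ facets, so $l>mn$ and $J_g$ cannot have full row rank globally. I would handle this as follows:
\begin{enumerate}
\item Fall back on the LICQ at each point, as assumed in Subsection~\ref{subsec:proj}: the active constraints $I(K)=\{i: g_i=0\}$ have linearly independent gradients.
\item Permute rows so that $F=\begin{bmatrix}J_aJ_a^\top & J_aJ_b^\top\\ J_bJ_a^\top & 2\operatorname{diag}(g_b)+J_bJ_b^\top\end{bmatrix}$, with $J_a$ the active rows.
\item Observe that $J_aJ_a^\top\succ0$ by LICQ, and $\operatorname{diag}(g_b)\succ0$ since the inactive constraints have $g_b>0$.
\item Conclude $v^\top Fv=2\sum_{i\notin I}g_iv_i^2+\|J_g^\top v\|^2$. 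This vanishes only if $v_i=0$ for every inactive $i$ and $J_a^\top v_a=0$, which forces $v_a=0$. Hence $F(K)\succ0$.
\end{enumerate}

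\textbf{Conclusion.} Combining both parts, the right-hand side of \eqref{math:proj_1} is a smooth vector field on $\mathcal{P}_C$. Proposition~\ref{prop:invariant}, applied with $f=f_K$ restricted to the compact set $\mathcal{P}_C\subset\mathcal{K}$ where $f_K$ is analytic, then yields unique trajectories that stay in $\mathcal{P}_C$.
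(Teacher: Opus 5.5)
\textbf{Assessment: there is a genuine gap in the nonstrict case.}

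Your handling of the projection is correct, and it is more careful than the paper's. The paper only invokes convexity of $\mathcal{P}_C$ and never checks that $2\operatorname{diag}(g)+J_gJ_g^\top$ is invertible. You observe that compactness forces $l>mn$, so $J_gJ_g^\top$ is singular and invertibility of $F(K)$ must come from LICQ on the active constraints; together with your quadratic-form argument, this fills a real hole. The strict case of the gradient part is also fine, since $\mathcal{P}_C\subset\mathcal{P}^+\subseteq\mathcal{K}$ by Lemma~\ref{lemma:relation}.

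In the nonstrict case you do not prove the lemma; you strengthen its hypothesis to $\mathcal{P}_C\subseteq\mathcal{K}$, and your justification for that is not available. Lemmas~\ref{lem:vertices} and~\ref{lem:under} certify only $\mathcal{P}_C\subseteq\mathcal{P}$. Marginally stable gains genuinely belong to $\mathcal{P}$ (in Section~\ref{sec:simu}, the gains with $K_1=-5-3K_2$ do), and the paper explicitly allows $\partial\mathcal{P}_C$ to contain them. Certifying Hurwitz vertices would not rescue this either. Take $A=\operatorname{diag}(-1,0)$, $B_u=I_2$, $D=0$, $C=B_d^\top$ and $P=I_2$. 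Both gains $\pm\left[\begin{smallmatrix}0&-1\\1&0\end{smallmatrix}\right]$ give Hurwitz $A_K$ with $A_K^\top+A_K\preceq 0$, yet their midpoint $K=0$ is marginally stable. At such gains $X_K$ and $Y_K$ do not exist (\eqref{math:Lyap2} has no solution). So pointwise well-definedness on all of $\mathcal{P}_C$ cannot be obtained by any argument.

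The missing idea is that well-definedness has to be established along trajectories. That is exactly what the initial condition $K(0)\in\operatorname{int}(\mathcal{P}_C)$ in \eqref{math:proj_1} is for, and your argument never uses it. The paper's proof runs as follows:
\begin{itemize}
\item $\nabla f_K$ is defined on $\mathcal{K}$.
\item $\mathcal{P}_C\subseteq\operatorname{cl}(\mathcal{K})$ by Lemma~\ref{lemma:relation}.
\item Marginally stable gains can only sit on $\partial\mathcal{P}_C$.
\item By Proposition~\ref{prop:invariant}, $\operatorname{int}(\mathcal{P}_C)$ is invariant, so $K(t)$ never reaches $\partial\mathcal{P}_C$.
\end{itemize}
If you follow this route, note that the inclusion $\operatorname{int}(\mathcal{P}_C)\subseteq\mathcal{K}$ is left implicit in the paper. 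It holds for the following reason. For $K\in\mathcal{P}$, the imaginary-axis eigenvalues of $A_K$ are semisimple. By stabilizability of $(A,B_u)$, every left eigenvector of $A_K$ at such an eigenvalue has a nonzero product with $B_u$. Hence an arbitrarily small change of $K$ moves these eigenvalues, to first order, into the open right half-plane. Therefore no marginally stable gain is an interior point of $\mathcal{P}\subseteq\operatorname{cl}(\mathcal{K})$.
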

\begin{proof}
   Since $\mathcal{P}_C$ is convex, the tangent cone $C_{\operatorname{vec}\left(K \right)} \mathcal{P}_C$ and the projection~\eqref{math:projection} onto it are well-defined and unique. 
   The gradient $\nabla f_K$ is well-defined for all $K\in\mathcal{K}$. By Lemma~\ref{lemma:relation}, $\mathcal{P}^{(+)  }\subseteq   \mathcal{K} \! \cup \! \partial \mathcal{K}$, and therefore $\mathcal{P}_C \subset \mathcal{K} \!\cup\! \partial \mathcal{K}$.   
   The boundary $\partial \mathcal{P}_C$ may contain marginally stable gains, for which $\nabla f_K$ is undefined due to divergence of \eqref{math:int_P} or \eqref{math:Y}.
       By Proposition~\ref{prop:invariant}, $\operatorname{int}(\mathcal{P}_C)$ is a positive invariant set. Thus, $K (0)\in \operatorname{int}(\mathcal{P}_C)$ implies   
   $K(t) \in \operatorname{int}(\mathcal{P}_C)$ for all $t\geq 0$.   
  Hence, $K(t)$ never reaches $\partial\mathcal{P}_C$  and~\eqref{math:proj_1} remains well-defined for all $t \ge 0$.
\end{proof}

\begin{thm}
   \label{theo:1} 
   Let $\mathcal{P}_C\subset \mathcal{P}^{(+)}$.
   Then, the projected gradient flow \eqref{math:proj_1} solves the optimization problem\footnote{Because $\mathcal{P}_C\subseteq \mathcal{P}^{(+)}$, every solution of \eqref{math:opt_main} is feasible for \eqref{math:opt_1}, and the optimal value of \eqref{math:opt_main} upper-bounds that of \eqref{math:opt_1}.}
       \begin{align}
         \label{math:opt_main} 
    \hat{K}=\arg\min_{K}~ & f_K \quad   \text{s.t.} \quad K\in  \mathcal{K}\cap \mathcal{P}_C,
   \end{align} 
   where $f_K$ is defined as in \eqref{math:cost_K}. Specifically, the trajectory $K(t) $  converges to $\hat{K}$ and remains within $\mathcal{P}_C$.
\end{thm}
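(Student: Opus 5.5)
The proof combines three things. The first is the invariance result of Proposition~\ref{prop:invariant} for the projected flow of \cite{jongen2003constrained}. The second is the analytic properties of $f_K$ on $\mathcal{K}$ recalled from \cite{bu2020clqr}. The third is a Lyapunov/LaSalle argument using $f_K$ as the Lyapunov function.

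\textbf{Step 1: invariance and feasibility.} By the preceding lemma, the flow \eqref{math:proj_1} is well defined and $K(t)\in\operatorname{int}(\mathcal{P}_C)$ for all $t\ge 0$. Hence $K(t)\in\mathcal{P}_C\subset\mathcal{P}^{(+)}$, and the trajectory remains feasible for \eqref{math:opt_main}.

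\textbf{Step 2: $f_K$ decreases along the flow.} The matrix $\Pi_{\mathcal{P}_C}$ in \eqref{math:projection} is the projection onto the tangent cone, so it is positive semidefinite. Writing $k=\operatorname{vec}(K)$,
\begin{align*}
\tfrac{d}{dt} f_{K(t)} = -\alpha\, \operatorname{vec}(\nabla f_K)^\top \Pi_{\mathcal{P}_C}\operatorname{vec}(\nabla f_K) \le 0,
\end{align*}
with equality exactly when the projected gradient vanishes. Since $f_{K(t)}\le f_{K(0)}<\infty$, the trajectory lies in the sublevel set $\{K\in\mathcal{K}: f_K\le f_{K(0)}\}$. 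Coercivity of $f_K$ makes this set compact and contained in $\mathcal{K}$, and this is what keeps $K(t)$ uniformly away from $\partial\mathcal{K}$, where the gradient blows up.

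\textbf{Step 3: uniqueness of the minimizer.} Intersecting the compact sublevel set with the compact set $\mathcal{P}_C$ shows that a minimizer $\hat K$ of \eqref{math:opt_main} exists and lies in $\mathcal{K}$. Uniqueness is the main obstacle, because $f_K$ is not convex in $K$. I would work in the convexifying variables of \cite{Feron}, i.e. $(Y,F=KY)$, where the cost is convex. The catch is that $\mathcal{P}_C$ is a polytope in $K$, not in $(Y,F)$, so the constraint does not transform nicely. Two fallbacks are available. One is to invoke the gradient dominance of $f_K$ on sublevel sets \cite{bu2020clqr} together with the KKT conditions over the polytope, and argue that every KKT point is the global minimizer; this also carries a gap, since gradient dominance alone only controls unconstrained stationary points. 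The other is to accept the theorem in the local sense of the footnote in Subsection~\ref{subsec:proj}, meaning convergence to a local minimizer, which is global whenever it is unique. I expect this step to need an additional assumption, or this interpretation, to be made fully rigorous.

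\textbf{Step 4: convergence.} By LaSalle's invariance principle on the compact forward-invariant set, together with real analyticity of $f_K$ (which via a Łojasiewicz argument rules out oscillation among equilibria), $K(t)$ converges to a single point where $\Pi_{\mathcal{P}_C}\operatorname{vec}(\nabla f_K)=0$. Such a point is a KKT point of \eqref{math:opt_main}. Identifying it with $\hat K$ via Step 3 completes the argument.

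One subtlety remains. If $\hat K\in\partial\mathcal{P}_C$, the limit of a trajectory confined to $\operatorname{int}(\mathcal{P}_C)$ can still lie on the boundary. The limit is taken in $\operatorname{cl}(\operatorname{int}\mathcal{P}_C)=\mathcal{P}_C$, so convergence holds, with $\hat K\in\mathcal{P}_C$, and the stated conclusion that $K(t)$ remains within $\mathcal{P}_C$ is unaffected.
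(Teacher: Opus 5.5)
Your proposal does not prove the statement. The gap is the one you flag yourself in Step 3: nothing in Steps 1, 2 and 4 identifies the limit of $K(t)$ with $\hat K$. At best those steps give convergence to some $\bar K\in\mathcal{K}\cap\mathcal{P}_C$ with $\Pi_{\mathcal{P}_C}\operatorname{vec}(\nabla f_{\bar K})=0$. For $g>0$ the Woodbury identity gives $\Pi_{\mathcal{P}_C}=\bigl(I_{mn}+J_g^\top(2\operatorname{diag}(g))^{-1}J_g\bigr)^{-1}\succ 0$. This is the correct justification of Step 2, not ``projection onto the tangent cone'', which in the interior would be $I_{mn}$. Hence, unless $K^*\in\operatorname{int}(\mathcal{P}_C)$, the limit lies on $\partial\mathcal{P}_C$. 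There $\Pi_{\mathcal{P}_C}v=0$ only says that $v$ lies in the span of the active constraint gradients, with multipliers of either sign. So even your claim ``such a point is a KKT point'' needs an argument that a trajectory coming from the interior cannot converge to a face-stationary point with a negative multiplier. Your \L{}ojasiewicz step also needs care, because $\Pi_{\mathcal{P}_C}$ degenerates in the normal directions at $\partial\mathcal{P}_C$. The local reading via the footnote in Subsection~\ref{subsec:proj} does not rescue the theorem, which asserts convergence to the minimizer $\hat K$.

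The paper closes this step in one line. It argues that because $f_K$ is gradient dominated on $\mathcal{K}$ with unique global minimizer $K^*$, and $\mathcal{P}_C$ is convex, \eqref{math:opt_main} has a unique minimizer. It then declares $V=f_K-f_{\hat K}$ a strict Lyapunov function with $\dot V=-\alpha\Vert\Pi_{\mathcal{P}_C}\operatorname{vec}(\nabla f_K)\Vert^2<0$ for $K\neq\hat K$, using that $\Pi_{\mathcal{P}_C}$ is symmetric and idempotent. Your skepticism is justified, so you cannot simply import this step.

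Gradient dominance rules out spurious \emph{unconstrained} stationary points, but it gives no control over stationary points relative to a convex set. Since $f_K\to\infty$ at $\partial\mathcal{K}$ and $\mathcal{K}$ can be nonconvex (e.g.\ $A=B_u=I_2$, as in the paper's nonconvexity example), there are segments inside $\mathcal{K}$ passing close to $\partial\mathcal{K}$ along which $f_K$ rises far above its endpoint values. A thin box around such a segment is a convex polytope with a local minimizer of $f_K$ on each side of the ridge. A flow started on the wrong side with cost below the ridge can never cross it, since $f_K$ decreases along the flow. Nothing in the paper's argument uses the passivity structure to exclude this. In addition, $\Pi_{\mathcal{P}_C}$ is not idempotent when $g>0$; it is only positive definite, which is what your Step 2 correctly uses. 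And on $\partial\mathcal{P}_C$, $\dot V=0$ at every face-stationary point, not only at $\hat K$.

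So the theorem as stated needs an extra hypothesis, e.g.\ that \eqref{math:opt_main} has a unique KKT point or that $f_K$ is quasiconvex on $\mathcal{P}_C$. Otherwise its conclusion must be weakened to convergence to a KKT point of \eqref{math:opt_main]}. That weaker conclusion is what your Steps 1, 2 and 4 essentially support once the points above are filled in. Your coercivity argument keeping $K(t)$ in a compact subset of $\mathcal{K}$ is something the paper only mentions in a remark after its proof.
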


\begin{proof}
   Over $\mathcal{K}$, $f_K$ is gradient dominant and has a unique global minimizer $K^*$. Hence, the constraint problem \eqref{math:opt_main} has a unique minimizer $\hat{K}$, because $\mathcal{P}_C$ is convex. Define $V(K) = f_K - f_{\hat{K}}$. Then $V(K)\geq 0$ and $V(K)=0$ if $K=\hat{K}$. Differentiating $V$ along \eqref{math:proj_1} gives $\dot{V}= \frac{\partial f_K}{\partial \operatorname{vec}\left(K \right)} \operatorname{vec}(\dot K )= - \alpha \operatorname{vec}(\nabla f_K)^\top \Pi_{\mathcal{P}_C} \operatorname{vec }(\nabla f_K) = - \alpha \Vert \Pi_{\mathcal{P}_C} \operatorname{vec}(\nabla f_K) \Vert^2 \leq 0 $, because the projection matrix $\Pi_{\mathcal{P}_C}$ is symmetric and idempotent. Equality holds only at $K=\hat{K}$ and $\dot V <0 $ for all $K\neq \hat{K}$. Hence, $V$ is a proper Lyapunov function and every trajectory $K(t)$ of \eqref{math:proj_1} converges to $\hat{K}$ as $t\to\infty$, since $V$ is monotonically decreasing  and every trajectory remains  $\mathcal{P}_C$ by Proposition~\ref{prop:invariant}.
\end{proof}

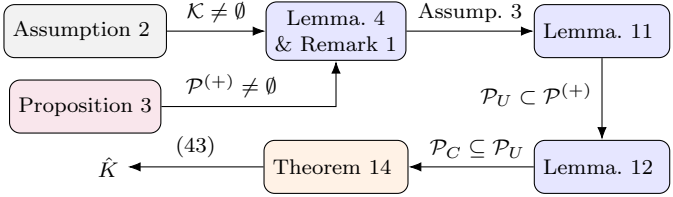
\begin{figure}[t]
\begin{tikzpicture}[
   node style/.style={rectangle, draw=black, align=center, minimum height=2em, minimum width=2em,rounded corners},
   lem style/.style={rectangle, draw=black, align=center, minimum height=2em, minimum width=3em,rounded corners,fill=blue!10},
    prop style/.style={rectangle, draw=black, align=center, minimum height=2em, minimum width=3em,rounded corners,fill=purple!10},
   theo style/.style={rectangle, draw=black, align=center, minimum height=2em, minimum width=2em,rounded corners,fill=orange!10},
   results style/.style={rectangle, draw=black, align=center, minimum height=2em, minimum width=2em,rounded corners,fill=gray!10},
   text style/.style={draw=none, fill=none,align=center},
   plus style/.style={circle, draw=black, inner sep=0.7pt, minimum size=0.6em, font=\scriptsize},
   arrow style/.style={-{Latex}},font=\footnotesize
 ]
  \node[results style] (A) at (0,0) {Assumption~\ref{ass:c+o} };
  \node[prop style] (B) at (0,-1) {Proposition~\ref{prop:feas}};
  \node[lem style] (C) at (3.3,0) {Lemma.~\ref{lemma:relation} \\ \&~Remark~\ref{remark:int}};
   \node[lem style] (D) at (6.8,0) {Lemma.~\ref{lem:vertices} };
   \node[lem style] (E) at (6.8,-1.8) {Lemma.~\ref{lem:under} };
   \node[theo style] (F) at (3.3,-1.8) {Theorem~\ref{theo:1}  };
   \node[text style] (G) at (0.3,-1.8) {$\hat{K}$};
\draw[arrow style] (A) --  node[midway, above] {$\mathcal{K}\neq \emptyset$} (C);
\draw[arrow style] (B) -|  node[pos=0.2, above] {$\mathcal{P}^{(+)}\neq \emptyset$} (C.south);
\draw[arrow style] (C) -- node[midway, above] {Assump.~\ref{ass:3}} (D);
\draw[arrow style] (D) -- node[pos=0.45, left] {$\mathcal{P}_U\subset\mathcal{P}^{(+)}$} (E);
\draw[arrow style] (E) -- node[pos=0.45, above] {$\mathcal{P}_C\subseteq \mathcal{P}_U$} (F);
\draw[arrow style] (F) -- node[midway, above] {\eqref{math:proj_1}} (G);
\end{tikzpicture}
\vspace{-0.5cm}
\caption{Structured overview of the logical dependencies of the indirect approach.}
\label{fig:5} 
\end{figure}

Several aspects are worth noting. First, the  trajectory of the projected gradient flow \eqref{math:proj_1} remains within $\mathcal{K}$, since $f_K$ is coercive over $\mathcal{K}$ and the initial condition satisfies $K(0) \in \mathcal{K}$.
Thus, \eqref{math:proj_1} implicitly satisfies the constraint $K\in\mathcal{K}$ in \eqref{math:opt_main}, even if $\partial \mathcal{K}\cap \mathcal{P}\neq \emptyset$.
Second, the constraint $K\in\mathcal{P}_C$ cannot be directly employed as a constraint in the optimization problems \eqref{math:LQR_feron} and \eqref{math:LQR_boyd}, as these problems do not admit a convex reparameterization in terms of $K$. Third, Objective~\ref{obj} is not satisfied exactly. The level of conservatism depends on the approximation error of $\mathcal{P}_C$ with respect to $\mathcal{P}^{(+)}$. Finally, if $\hat{K} \in \partial \mathcal{P}_C$, the trajectory $K(t)$ of \eqref{math:proj_1} does not reach $\hat{K}$ but can approach it arbitrarily close due to Proposition~\ref{prop:invariant} and $K(0)\in \operatorname{int}(\mathcal{P}_C)$.
Fig.~\ref{fig:5} summarizes the indirect approach by depicting the logical dependencies among assumptions, propositions, lemmas, and theorems.

\section{Simulation Results}
\label{sec:simu} 
Consider the system 
\begin{subequations}
   \label{math:sys_example} 
   \begin{align}
 A &= \begin{bmatrix} -2 & -1 \\ -1 & -3\end{bmatrix}, ~  B_u = \begin{bmatrix}1 \\ 2\end{bmatrix}, ~ B_d = \begin{bmatrix}2 \\ 1 \end{bmatrix}, ~ C = \begin{bmatrix}0 & 1\end{bmatrix},\\
 D & = 0,~ Q  = I_2 , ~ R =2, ~ K = \begin{bmatrix} K_1 & K_2 \end{bmatrix}.
\end{align} 
\end{subequations}
By the Routh-Hurwitz Criterion, the stability region~\eqref{math:K_}~is 
\begin{align}
 \mathcal{K}\!=\!\{ K \!\in\!\mathbb{R}^{1\times 2}\!\mid \!5 \!+ \!K_1\! + \!2 K_2 \!>\! 0,~ 5 \!+\! K_1\! +\! 3 K_2 \!>\!0 \}. \label{math:stab_example} 
\end{align} 
Next, the region $\mathcal{P}$ is calculated analytically for the sake of illustration. The constraints $B_d^\top P = C$ and $P\succ 0$ yield
\begin{align}
 P = \begin{bmatrix} p & -2 p \\ -2p & 1+4p\end{bmatrix},\quad p>0.
\end{align} 
Substituting this into $A_K^\top P + P A_K \preceq 0$ yields 
\begin{align}
 \tilde{M} = \begin{bmatrix} 6K_1 p~ & 5p - 2K_1 - 6 K_1 p + 3 K_2 p - 1 \\ * & - 4 K_2 - 20 p - 12 K_2 p - 6\end{bmatrix}\preceq 0. \label{math:M_an} 
\end{align} 
By Sylvester's criterion, \eqref{math:M_an} is equivalent to 
\begin{align}
   \label{math:set} 
 K_1 \leq 0, \quad 4K_2 + 20p+12K_2 p \geq 0, \quad \operatorname{det}(\tilde{M}) \geq 0.
\end{align} 
With $p>0$, the second inequality in \eqref{math:set} implies $K_2 > -\tfrac{5}{3}$.
The determinant has the form $\operatorname{det }(\tilde{M}) = c_1 p^2 + c_2 p + c_3 $, where $K_1\leq 0$ and $K_2 >-\tfrac{5}{3}$ imply $c_1 <0$. On the boundary $\operatorname{det}(\tilde{M})=0$, the maximum of $\operatorname{det }(\tilde{M})$ over $p>0$ is zero. This occurs when $ \operatorname{det}(\tilde{M})=0$ has a double root in $p$, i.e., the discriminant $c_2^2 - 4 c_1 c_3 =0$ is zero, leading to the zeros $K_1=0$ and $ K_1 = - \tfrac{25 + 14 K_2 \pm 5\vert 3 + 2 K_2 \vert}{8}$. Altogether 
   \begin{align}
 \mathcal{P}&=\Bigl\{ K\in\mathbb{R}^{1\times 2}\mid K_1\leq 0, K_2 > - \tfrac{5}{3},~ \text{and} \notag \\
 &\quad \quad K_1 \geq -5 -3 K_2 ~\text{if}~ \tfrac{-5}{3}< K_2 < \tfrac{-3}{2}, \label{math:passivty_example} \\
 & \quad \quad K_1 \geq  -\tfrac{5}{4}-\tfrac{1}{2}K_2 ~\text{if}~ K_2 \geq \tfrac{-3}{2} \Bigr\} \notag
\end{align}
  \begin{figure}[t]
    \hspace{0.21cm}
\includegraphics[scale=0.85]{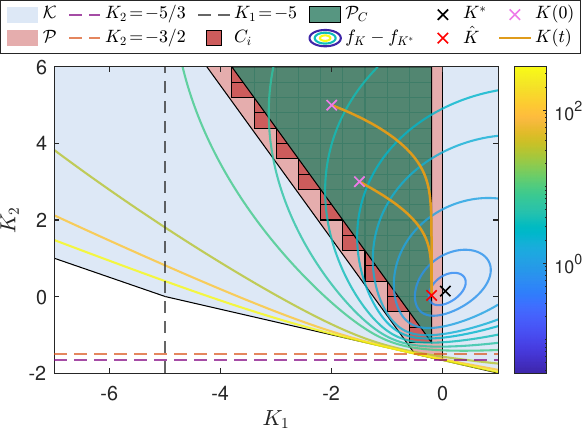}
 \caption{Visualization of stability region $\mathcal{K}$, the passivity region $\mathcal{P}$, the verified cubes $\mathcal{C}_i$, the convex inner approximation $\mathcal{P}_C$, the cost function $f_K - f_{K^*}$ and the trajectory of the projected gradient flow $K(t)$.}
 \label{fig:exam2}
\end{figure}
Fig.~\ref{fig:exam2} illustrates the stability region \eqref{math:stab_example} and the passivity region \eqref{math:passivty_example} of system \eqref{math:sys_example}. As stated in Lemma~\ref{lemma:relation}, marginally stable feedback gains are part of the passivity region, i.e., gains that satisfy  $K_1 = -5-3 K_2$.  
The optimal LQR feedback of $(A,B_u)$ is $K^* \approx \begin{bmatrix} 0.048 & 0.143\end{bmatrix}$ and lies outside the passivity region since $K_1^*>0$.
Fig.~\ref{fig:exam2} further depicts the verified cubes $\mathcal{C}_i\subset \mathcal{P}$ with edge length $0.4$, the resulting convex inner approximation $\mathcal{P}_C$, the normalized cost function $f_K - f_{K^*}$, and the trajectories of the projected gradient flow \eqref{math:proj_1} for two distinct initial values satisfying $K(0)\in \operatorname{int}(\mathcal{P}_C)$. The trajectories $K(t)$ converge to $\hat{K}$ within the set $\mathcal{P}_C$ that minimizes the LQR cost. 
  \begin{figure}[t]
    \hspace{0.24cm}
\includegraphics[scale=0.87]{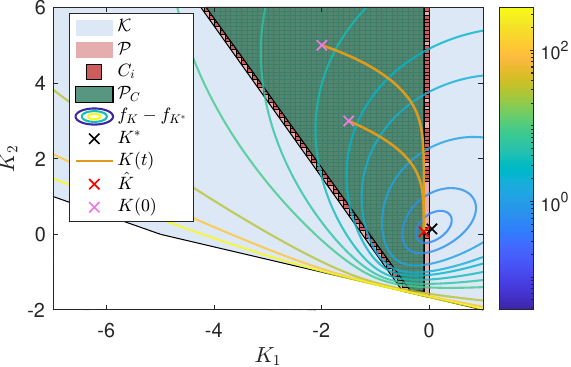}
 \caption{Visualization analogous to Fig.~\ref{fig:exam2}, with cubes $\mathcal{C}_i$ having edge length $0.1$.  }
 \label{fig:exam3}
\end{figure}

The effect of refining the grid is illustrated in Fig.~\ref{fig:exam3}, where the cube edge length is reduced to $0.1$. The inner approximation $\mathcal{P}_C$ is now characterized by four inequalities. 
For $ -0.1 \leq K_1 \leq 0$ and $K\in\mathcal{P}$, some cubes $\mathcal{C}_i$ within the passivity region could not be verified with Lemma~\ref{lem:vertices}.
It remains unclear whether this issue is due to numerical inaccuracies or whether the cube is too large for the existence of a common storage function. Nonetheless, this only affects the accuracy of the solution, and the projected gradient flow still performs correctly.

\section{Conclusion and Outlook}

This work addressed the design of state-feedback gains that ensure (strict) passivity at an external port while achieving optimal LQR performance.
We established key topological properties of the passivating-gain set, including path-connectedness and contractibility. Building on this insight, we proposed an indirect approach based on inner-approximating the passivity region by a convex polytope and optimizing the LQR cost via a projected gradient flow. A remaining challenge is extending the convex inner approximation to higher-dimensional systems, as stated in Remark~\ref{remark:inner}. Addressing this limitation is an important direction for future research.

\bibliography{ifacconf.bib}       
\end{document}